\newcommand{\erase}[1]{}
\theoremstyle{remark}
\newtheorem{theorem}{Theorem}[section]
\newtheorem{proposition}[theorem]{Proposition}
\newtheorem{corollary}[theorem]{Corollary}
\newtheorem{lemma}[theorem]{Lemma}
\numberwithin{equation}{section}
\newcommand{\bp}{\begin{pmatrix}}
\newcommand{\ep}{\end{pmatrix}}
\newcommand{\bps}{\begin{smallmatrix}}
\newcommand{\eps}{\end{smallmatrix}}
\def\Z{{\mathbb Z}}
\def \0{{\bf 0}}
\def \1{{\bf 1}}
\def \mf#1#2#3#4{
\xymatrix{{#1}\  \ar@<0.4ex>[r]^{{#2}} & \ {#4}
\ar@<0.4ex>[l]^{{#3}}
}
}
\def \mfs#1#2#3#4{\!
\xymatrix@C=1.5em{{#1} \! \ar@<0.2ex>[r]^{{#2}} & \! {#4}
\ar@<0.2ex>[l]^{{#3}}
}
\!}
\def \mfl#1#2#3#4{
\xymatrix@C=2.6em{{#1}\  \ar@<0.4ex>[r]^{{#2}} &\  {#4}
\ar@<0.2ex>[l]^{{#3}}
}
}
\def \mfss#1#2#3#4{\!
\xymatrix@C=1.5em{{#1} \ar@<0.3ex>[r]^{{#2}} & {#4}
\ar@<0.3ex>[l]^{{#3}}
}
\!}
\newcommand{\deeq}{\mathbin{\hbox{$=$ \lower 1.7pt\rlap{\hskip -8.5pt .}}}} %Editor
\begin{document}
\begin{frontmatter}
\title{On the derivative at $t = 1$ of the skew-growth functions for Artin monoids.} 
\author{Tadashi Ishibe}

\begin{abstract}
Let $G_{M}^{+}$ be the Artin monoid of finite type generated by the letters $a_i, i\in I$ with respect to a Coxeter matrix $M$ that is equipped with the degree map $\deg\!:\!G_{M}^{+} \to\!\Z_{\ge0}$ defined by assigning to each equivalence class of words the length of the words, and let $N_{M, \deg}(t)\!:=\!\!\sum_{J \subset I}(-1)^{\#J} t^{\deg(\Delta_{J})}$ be the skew-growth function, where the summation index $J$ runs over all subsets of $I$ and $\Delta_{J}$ is the fundamental element in $G_{M}^{+}$ associated to the set $J$. In this article, we will calculate the derivative at $t = 1$ of the polynomial $N_{M, \deg}(t)$. As a result, we show that the polynomial $N_{M, \deg}(t)$ has a simple root at $t = 1$.
\end{abstract}
\begin{keyword}
%% keywords here, in the form: keyword \sep keyword
Artin monoid, growth function, zeroes of polynomial
%% MSC codes here, in the form: \MSC code \sep code
%% or \MSC[2008] code \sep code (2000 is the default)

\end{keyword}
\end{frontmatter}
%\hmjlogo{}{}{}{}
%\address{Department of Mathematical Sciences, \\
%University of Tokyo, \\
%3-8-1 Komaba Meguro-ku Tokyo, 153-8914 Japan
%}
  
%\date{}

%\vspace{ -0.6cm} Editor

% {\renewcommand{\baselinestretch}{0.9} Editor

%\email{tishibe@ms.u-tokyo.ac.jp}
%  \vspace{0.3cm}

%\subjclass{20F05}

%\keywords{monoid, growth function, inversion formula}

%2000 Mathematics Subject Classification Numbers:  20F05 \\
%key words and phrases:  monoid, fundamental group, the word problem, the conjugacy problem

%\maketitle

\section{Introduction} 
Let $G_{M}^{+}$ be the Artin monoid of finite type (\cite{[B-S]}\S1) generated by the letters $a_i, i\in I$ with respect to a Coxeter matrix $M$ (\cite{[B]}). Due to the homogeneity of the defining relations in $G_{M}^{+}$, we naturally define a map $\deg\!:\!G_{M}^{+} \to\!\Z_{\ge0}$ defined by assigning to each equivalence class of words the length of the words. The \emph{spherical growth function} for the monoid $G_{M}^{+}$ is defined as 
\[
P_{G_{M}^{+}, \deg}(t)\!:=\!\!\sum_{u \in G_{M}^{+}}\!t^{\deg(u)}.
\]
In \cite{[A-N]}\cite{[Bro]}\cite{[S1]}, they show that the inversion function $P_{G_{M}^{+}, \deg}(t)^{-1}$ is given by the following function, called the \emph{skew-growth function}, 
\[
N_{M, \deg}(t):= \sum_{J \subset I}(-1)^{\#J} t^{\deg(\Delta_{J})},
\]
where the summation index $J$ runs over all subsets of $I$ and $\Delta_{J}$ is the fundamental element in $G_{M}^{+}$ associated to the set $J$ (\cite{[B-S]}\S5). That has been investigated by several authors (\cite{[A-N]}\cite{[B]}\cite{[Bro]}\cite{[D]}\cite{[I1]}\cite{[I2]}\cite{[S1]}\cite{[S2]}\cite{[S3]}\cite{[S4]}\cite{[X]}). In \cite{[B-S]}\S4, the authors show that the monoid $G_{M}^{+}$ satisfies the LCM condition (i.e. any two elements $\alpha$ and $\beta$ in it admit the left (resp.~right) least common multiple). By using this property, for a subset $J \subset I$, they defined  the fundamental element $\Delta_{J}$, as the right least common multiple of all the letters $a_i, i\in J$. In \cite{[S1]}\S4, it is observed that the polynomial $N_{M, \deg}(t)$ has a simple root at $t = 1$. In this article, we will calculate the derivative at $t = 1$ of the polynomial $N_{M, \deg}(t)$. As a result, we show that the polynomial $N_{M, \deg}(t)$ has a simple root at $t = 1$.\par
Our main theorem is the following.
\begin{theorem}
{\it For a Coxeter matrix $M$, the derivative at $t = 1$ of the polynomial $N_{M, \deg}(t)$ is given by the following list:\\
\noindent
\,\,\,\,$A_{l\ge1}$\,: \,\,\, $N'_{M, \deg}(1) = (-1)^{l}$,\,\,\,\,\,\,\,\,\,\,\,\,\,\,\,\,\,\,\,\, $E_{8}$\,: \,\,\,\,\,\,\,\,\,\,\, $N'_{M, \deg}(1) = 44$,\\
\,\,\,\,$B_{l\ge2}$\,: \,\,\, $N'_{M, \deg}(1) = (-1)^{l}l$,\,\,\,\,\,\,\,\,\,\,\,\,\,\,\,\,\,\,\,\,\,$F_{4}$\,: \,\,\,\,\,\,\,\,\,\,\,\,\,\,$N'_{M, \deg}(1) = 10$,\\
\,\,\,\,$D_{l\ge4}$\,: \,\,\, $N'_{M, \deg}(1) = (-1)^{l}(l-2)$,\,\,\,\,\,$H_{3}$\,: \,\,\,\,\,\,\,\,\,\,\, $N'_{M, \deg}(1) = -8$,\\
\,\,\,\,$E_{6}$\,: \,\,\,\,\,\,\,\,\, $N'_{M, \deg}(1) = 7$,\,\,\,\,\,\,\,\,\,\,\,\,\,\,\,\,\,\,\,\,\,\,\,\,\,\,\,\,\,\,\,\,$H_{4}$\,: \,\,\,\,\,\,\,\,\,\,\,\,\,\,$N'_{M, \deg}(1) = 42$,\\
\,\,\,\,$E_{7}$\,: \,\,\,\,\,\,\,\,\, $N'_{M, \deg}(1) = -16$,\,\,\,\,\,\,\,\,\,\,\,\,\,\,\,\,\,\,\,\,\,\,\,\,$I_{2}(p\ge5)$: $N'_{M, \deg}(1) = p-2$.\\

}
\end{theorem}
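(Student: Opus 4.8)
The plan is to reduce $N'_{M,\deg}(1)$ to a count of positive roots with full support, and then to evaluate that count type by type. Differentiating the polynomial termwise and setting $t=1$ gives
\[
N'_{M,\deg}(1)=\sum_{J\subseteq I}(-1)^{\#J}\deg(\Delta_{J}),
\]
the constant term having already vanished because $\sum_{J\subseteq I}(-1)^{\#J}=0$. The classical input I would invoke is that $\deg(\Delta_{J})$ --- the word length of the fundamental (Garside) element of the parabolic monoid $G^{+}_{M_{J}}$ --- equals the length $\ell(w_{0,J})$ of the longest element of the finite Coxeter group $W_{J}$, hence equals the number $\#\Phi_{J}^{+}$ of positive roots of the associated parabolic root system $\Phi_{J}$. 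Writing $\mathrm{supp}(\alpha)\subseteq I$ for the set of simple roots occurring in a positive root $\alpha\in\Phi^{+}$, one has $\alpha\in\Phi_{J}$ exactly when $\mathrm{supp}(\alpha)\subseteq J$, so that $\deg(\Delta_{J})=\#\{\alpha\in\Phi^{+}:\mathrm{supp}(\alpha)\subseteq J\}$.

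Substituting this and interchanging the two summations,
\[
N'_{M,\deg}(1)=\sum_{\alpha\in\Phi^{+}}\ \sum_{\mathrm{supp}(\alpha)\subseteq J\subseteq I}(-1)^{\#J},
\]
and for a fixed $\alpha$ with $S:=\mathrm{supp}(\alpha)$ the inner sum equals $(-1)^{\#S}\sum_{K\subseteq I\setminus S}(-1)^{\#K}$, which is $0$ unless $S=I$ and is $(-1)^{\#I}$ when $S=I$. Therefore
\[
N'_{M,\deg}(1)=(-1)^{l}\,m_{I},\qquad m_{I}:=\#\{\alpha\in\Phi^{+}:\mathrm{supp}(\alpha)=I\},\quad l:=\#I,
\]
so that $t=1$ is automatically a simple root as soon as $m_{I}\neq 0$, and the theorem amounts to the evaluation of $m_{I}$ for each irreducible finite type.

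For the four infinite families this is uniform and can be read off the standard coordinate model of the roots: in type $A_{l}$ only the highest root $\alpha_{1}+\dots+\alpha_{l}$ has full support, so $m_{I}=1$; in type $B_{l}$ (the same Coxeter matrix as $C_{l}$) the full-support positive roots are $e_{1}$ together with the $e_{1}+e_{j}$, $2\le j\le l$, so $m_{I}=l$; in type $D_{l}$ they are the $e_{1}+e_{j}$ for $2\le j\le l-1$, so $m_{I}=l-2$; and in the dihedral type $I_{2}(p)$ every positive root except the two simple roots has full support, so $m_{I}=p-2$. For the six exceptional types $E_{6},E_{7},E_{8},F_{4},H_{3},H_{4}$ I would fall back on a finite computation. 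The most systematic way is a recursion on connected induced subdiagrams: the support of any positive root (equivalently, of any reflection) is a connected subdiagram, so the total number of positive roots of a diagram $\Gamma$ is $\sum_{\Gamma'}m(\Gamma')$ over the connected induced subdiagrams $\Gamma'\subseteq\Gamma$, where $m(\Gamma')$ depends only on the isomorphism type; solving this for the full diagram, using the already-known values for $A$, $B$, $D$, $I_{2}$ and for the smaller exceptional diagrams occurring inside $E_{7}$ and $E_{8}$, yields $m_{I}=7,16,44,10,8,42$ for $E_{6},E_{7},E_{8},F_{4},H_{3},H_{4}$ respectively. Multiplying each $m_{I}$ by $(-1)^{l}$ then reproduces the list in the statement. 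The only genuine labor is this last enumeration: it is immediate for $A$, $B$, $D$, $I_{2}$ and for the small exceptional cases, and becomes a moderately long but entirely elementary bookkeeping for $E_{7}$ and especially $E_{8}$; there is no conceptual obstacle beyond keeping the inclusion--exclusion organized.
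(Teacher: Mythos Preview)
Your argument is correct and takes a genuinely different, more conceptual route than the paper. The key observation you make---that $\deg(\Delta_J)=\ell(w_{0,J})=\#\Phi_J^+$ and that $\Phi_J^+=\{\alpha\in\Phi^+:\mathrm{supp}(\alpha)\subseteq J\}$---lets you interchange the sums and collapse the whole alternating sum to $(-1)^{l}m_I$, where $m_I$ counts positive roots of full support. This is a single uniform identity valid for every finite type, and it explains both the sign pattern and the simple-zero corollary at once (the highest root always has full support, so $m_I\ge 1$). The paper, by contrast, never invokes the root system. It works entirely from the explicit degree formulas for $\Delta_J$ in \cite{[B-S]}: for $A_l$ it stratifies subsets $J$ by the number $k$ of connected components of $\Gamma_{A_l}(J)$, proves by double induction a closed form $\widetilde C^{(k)}_{A_l,j}=k\binom{j+1}{k+1}$, derives via Vandermonde the recursion $C_{A_{l+1},j+1}-C_{A_l,j}=(j+1)\binom{l+1}{j+1}$, and telescopes to $N'_{A_{l+1}}(1)+N'_{A_l}(1)=0$; $B_l$ is handled by computing $N'_{B_l}(1)-N'_{A_l}(1)$ directly, and $D_l$ by a separate induction on $l$. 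What the paper's approach buys is self-containment within the Artin monoid and elementary binomial manipulations, at the cost of length and of treating the three classical families by three different arguments. What your approach buys is brevity and uniformity, at the cost of importing a standard fact from Coxeter combinatorics (which also covers the non-crystallographic cases $H_3$, $H_4$, $I_2(p)$ once one uses the Coxeter-group root system). The exceptional-type bookkeeping you outline is straightforward and matches the paper's hand-verified values.
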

%\begin{table}[htb]
%  \begin{tabular}{|l||c|r|r|r|r|r|r|r|r|} \hline
%    $M$ & $D_l$ & $E_6$ & $E_7$ & $E_8$ & $F_4$ & $G_{2}$ & $H_{3}$ & $H_{4}$ & $I_2(p)$ \\ \hline
%    $N'_{M, \deg}(1)$ & $(-1)^{l}(l-2)$ & $7$ & $-16$ & $44$ & $10$ & $4$ & $-8$ & $42$ & $p-2$ \\ \hline
   
%  \end{tabular}
%\end{table}
The above statement can be verified by hand calculation for the types $E_{6}, E_{7}, E_{8}, F_{4}, H_{3}, H_{4}$ and $I_{2}(p\ge5)$. In \S3, we will prove Theorem 1.1 for the type $A_{l}$. By using the results in \S3, we will prove Theorem 1.1 for the type $B_{l}$ and $D_{l}$ in \S4, \S5.
As a corollary of Theorem 1.1, we obtain the following.
\begin{corollary}
%\label{mythm}
{\it The polynomial $N_{M, \deg}(t)$ has a simple root at $t = 1$.
 }
\end{corollary}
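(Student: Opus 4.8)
The plan is to deduce the corollary directly from Theorem 1.1 together with the well-known fact that $N_{M,\deg}(1)=0$. First I would recall why $N_{M,\deg}(1)=0$: substituting $t=1$ gives $N_{M,\deg}(1)=\sum_{J\subset I}(-1)^{\#J}$, and this alternating sum over all subsets of the nonempty finite set $I$ is the binomial identity $\sum_{k=0}^{\#I}(-1)^{k}\binom{\#I}{k}=(1-1)^{\#I}=0$. Hence $(t-1)$ divides the polynomial $N_{M,\deg}(t)$, so $t=1$ is a root of multiplicity at least one. (Equivalently, this reflects the fact that $P_{G_M^+,\deg}(t)^{-1}$ vanishes at $t=1$ because the monoid is infinite, so its spherical growth series diverges there.)

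Next I would invoke Theorem 1.1: in every case of the classification — the infinite families $A_{l\ge1}$, $B_{l\ge2}$, $D_{l\ge4}$, $I_2(p\ge5)$ and the finitely many exceptional types $E_6,E_7,E_8,F_4,H_3,H_4$ — the listed value of $N'_{M,\deg}(1)$ is a nonzero integer. Indeed $(-1)^{l}\ne0$, $(-1)^{l}l\ne0$ for $l\ge2$, $(-1)^{l}(l-2)\ne0$ for $l\ge4$, $p-2\ne0$ for $p\ge5$, and the exceptional values $7,-16,44,10,-8,42$ are all visibly nonzero. Therefore $N'_{M,\deg}(1)\ne0$ in all cases.

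Finally I would combine the two facts: since $N_{M,\deg}(1)=0$ but $N'_{M,\deg}(1)\ne0$, the multiplicity of the root $t=1$ is exactly one; that is, $N_{M,\deg}(t)$ has a simple root at $t=1$. This is immediate from the factor theorem applied twice — $(t-1)\mid N_{M,\deg}(t)$ but $(t-1)^2\nmid N_{M,\deg}(t)$, because if we wrote $N_{M,\deg}(t)=(t-1)^2 Q(t)$ then differentiating and evaluating at $t=1$ would force $N'_{M,\deg}(1)=0$, a contradiction.

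There is essentially no obstacle here: the corollary is a formal consequence of Theorem 1.1 and the elementary observation $N_{M,\deg}(1)=0$. The only thing to be careful about is to check the nonvanishing of $N'_{M,\deg}(1)$ uniformly across the whole list, which amounts to the trivial verifications recorded above for the parameter ranges $l\ge1$ (type $A$), $l\ge2$ (type $B$), $l\ge4$ (type $D$) and $p\ge5$ (type $I_2$).
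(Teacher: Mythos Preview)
Your proposal is correct and matches the paper's approach: the corollary is stated there without a separate proof, being the immediate consequence of Theorem~1.1 together with the elementary vanishing $N_{M,\deg}(1)=\sum_{J\subset I}(-1)^{\#J}=0$. Your explicit verification that every value in the list of Theorem~1.1 is nonzero, and your reminder of why $t=1$ is a root at all, simply spell out what the paper leaves implicit.
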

\section{Preliminary results}
Let $l$ be a positive integer and let $I = \{1, 2, \ldots , l \}$. The Coxeter matrix $M = (m(\alpha, \beta))_{\alpha, \beta \in I}$ of the type $X_{l} \in \{ A_{l}, B_{l}, D_{l} \}$ is given by the following list. For the type $A_{l}$, we give
\[
m(\alpha, \beta) = \left \{ \begin{array}{lll}
 1 & \mbox{if $\alpha = \beta$}  \\
3  & \mbox{if  $|\alpha - \beta| = 1$\,\,\,\,\,\,\,\,\,\,\,\,\,\,\,\,\,\,\,\,\,\,\,\,\,\,\,\,\,\,\,\,\,\,\,\,\,\,\,\,\,\,\,\,\,\,\,\,\,\,\,\,} \\
   2 & \mbox{if $|\alpha - \beta| > 1$} 
          \end{array}
\right.
\]
For the type $B_{l}$
\footnote
{ For the type $B_{l}$, we adopt for convenience the different definition of the Coxeter matrix from that in \cite{[B]}.}, we give
\[
m(\alpha, \beta) = \left \{ \begin{array}{lll}
 1 & \mbox{if $\alpha = \beta$}  \\
3  & \mbox{if  $|\alpha - \beta| = 1$ and $\alpha + \beta > 3$\,\,\,\,\,\,\,\,\,\,\,\,} \\
   2 & \mbox{if $|\alpha - \beta| > 1$} \\
 4 & \mbox{if  $\alpha + \beta = 3$} 
          \end{array}
\right.
\]
For the type $D_{l}$, we give
\[
m(\alpha, \beta) = \left \{ \begin{array}{lll}
 1 & \mbox{if $\alpha = \beta$}  \\
2 & \mbox{if $|\alpha - \beta| > 1$ and $\alpha + \beta \not= 2l-2$} \\
 2 & \mbox{if $\alpha + \beta = 2l-1$} \\
3  & \mbox{if  $|\alpha - \beta| = 1$ and $\alpha + \beta < 2l-1$} \\
 3 & \mbox{if $\alpha + \beta = 2l-2$ and $\alpha \not= \beta$} \\
          \end{array}
\right.
\]
We simply write the polynomial $N_{M, \deg}(t)$ by $N_{X_{l}}(t)$. Namely, we put
\[
 N_{X_{l}}(t) := \sum_{J \subset I}(-1)^{\#J} t^{\deg(\Delta_{X_{l}, J})},
\]
where $\Delta_{X_{l}, J}$ is the fundamental element in the Artin monoid $G_{M}^{+}$ associated to the set $J$. Moreover, for a non-negative integer $j \in  \{\, 0, \ldots, l \,\}$ we put
\[
N_{X_{l}, j}(t) := \sum_{J \subset I,\, \#J = j} t^{\deg(\Delta_{X_{l}, J})},\,\, C_{X_{l}, j}:=\left.\frac{\mathrm{d}N_{X_{l}, j}(t)}{\mathrm{d}t}\right|_{t=1}.
\]
Therefore, we have the following equations:
\[
N'_{X_{l}}(1) = \sum_{j=1}^{l}(-1)^{j}C_{X_{l}, j} ,\,\, C_{X_{l}, j}= \sum_{J \subset I,\, \#J = j} \deg(\Delta_{X_{l}, J}).
\]
To a Coxeter matrix $M = (m(\alpha, \beta))_{\alpha, \beta \in I}$ of the type $X_{l}$, we attach a Coxeter graph $\Gamma_{X_{l}}$ whose vertices are indexed by the set $I$ and two vertices $\alpha$ and $\beta$ are connected by an edge iff $m(\alpha, \beta)\ge 3$. For a subset $J \subset I$, we associate a full subgraph $\Gamma_{X_{l}}(J)$, whose vertices are indexed by the set $J$. The edge is labeled by $m(\alpha, \beta)$ (omitted if $m(\alpha, \beta)=3$). We note that $\Gamma_{X_{l}}(I)$ corresponds to the graph $\Gamma_{X_{l}}$. For a subgraph $\Gamma_{X_{l}}(J)$ of $\Gamma_{X_{l}}$, we write the number of connected components of $\Gamma_{X_{l}}(J)$ by $k_{X_{l}}(J)$. Let $\Gamma_{X_{l}}(J)$ be a full subgraph of $\Gamma_{X_{l}}$ with $k$-connected components $\Gamma_{X_{l}}(J_1), \Gamma_{X_{l}}(J_2), \ldots, \Gamma_{X_{l}}(J_k)$.
%\[
%\mathrm{min}(J_{1}) < \mathrm{min}(J_{2}) < \cdots < \mathrm{min}(J_{k}).
%\]
Then, we write
\[
\Gamma_{X_{l}}(J) = \bigsqcup_{i=1}^{k} \Gamma_{X_{l}}(J_i) .
\]
We recall a fact from \cite{[B-S]}.
\begin{proposition}
{\it For a subset $J \subset I$, we suppose that the full subgraph  $\Gamma_{X_{l}}(J)$ has a decomposition $\Gamma_{X_{l}}(J) = \bigsqcup_{i=1}^{k} \Gamma_{X_{l}}(J_i)$. Then:  \\
 (1)  For $1 \leq i < j \leq k$, $\Delta_{X_{l}, J_i}$ and $\Delta_{X_{l}, J_j}$commute with each other. \\
 (2)  Then, the fundamental element $\Delta_{X_{l}, J}$ can be written as a product of the fundamental elements $\Delta_{X_{l}, J_1}, \ldots, \Delta_{X_{l}, J_k}$:
\[
\Delta_{X_{l}, J} \deeq \Delta_{X_{l}, J_1} \cdots \Delta_{X_{l}, J_k}.
\]
}
\end{proposition}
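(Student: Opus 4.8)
The statement is classical, due to Brieskorn--Saito \cite{[B-S]}, \S5, and the plan is to recall that argument in the notation of the present paper. The one combinatorial input is that the decomposition $\Gamma_{X_l}(J)=\bigsqcup_{i=1}^{k}\Gamma_{X_l}(J_i)$ into connected components means precisely that $m(\alpha,\beta)=2$ for all $\alpha\in J_p$, $\beta\in J_q$ with $p\neq q$, i.e.\ $a_\alpha a_\beta=a_\beta a_\alpha$ in $G_M^{+}$. I will take for granted the structure theory of $G_M^{+}$ from \cite{[B-S]}, \S4--\S5: $G_M^{+}$ is left- and right-cancellative and satisfies the LCM condition, and for every $K\subset I$ the element $\Delta_{X_l,K}$ is supported on $K$ (it lies in the submonoid generated by $\{a_\gamma:\gamma\in K\}$) and is by definition the $\preceq$-least common multiple of the letters $a_\gamma$, $\gamma\in K$, where $u\preceq v$ abbreviates $v\in uG_M^{+}$. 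Granting these, the proof of (2) reduces to the case $k=2$ by an immediate induction: put $J'=J_1$ and $J''=J_2\sqcup\cdots\sqcup J_k$, note that $J'$ and $J''$ fully commute, and apply the induction hypothesis to $J''$. Part (1) is then also immediate: $\Delta_{X_l,J_i}$ is a product of letters $a_\alpha$ with $\alpha\in J_i$ and $\Delta_{X_l,J_j}$ a product of letters $a_\beta$ with $\beta\in J_j$, and every such $a_\alpha$ commutes with every such $a_\beta$, so the two elements commute.

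For (2) it remains to show: if $J=J'\sqcup J''$ with $m(\alpha,\beta)=2$ for all $\alpha\in J'$, $\beta\in J''$, then $\Delta_{X_l,J}=\Delta_{X_l,J'}\Delta_{X_l,J''}$. Set $D:=\Delta_{X_l,J'}\Delta_{X_l,J''}=\Delta_{X_l,J''}\Delta_{X_l,J'}$, the last equality by (1). First, $D$ is a common multiple of all $a_i$, $i\in J$: for $i\in J'$ one has $a_i\preceq\Delta_{X_l,J'}\preceq D$, and for $i\in J''$ one has $a_i\preceq\Delta_{X_l,J''}$ together with $D=\Delta_{X_l,J''}\Delta_{X_l,J'}$, so again $a_i\preceq D$. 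For minimality, let $c$ be any common multiple of all $a_i$, $i\in J$. Since $a_i\preceq c$ for every $i\in J'$ and $\Delta_{X_l,J'}$ is the least such, $c=\Delta_{X_l,J'}d$ for some $d\in G_M^{+}$. The crucial claim is that $a_j\preceq d$ for every $j\in J''$; granting it, $\Delta_{X_l,J''}\preceq d$, hence $D=\Delta_{X_l,J'}\Delta_{X_l,J''}\preceq\Delta_{X_l,J'}d=c$, and since $D$ is itself a common multiple it is the $\preceq$-least one, that is, $D=\Delta_{X_l,J}$.

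The only point I expect to need real care is this claim: if $a_j$ commutes with every letter occurring in $\Delta_{X_l,J'}$ (which holds because $j\notin J'$), then $a_j\preceq\Delta_{X_l,J'}d$ forces $a_j\preceq d$. The plan is to let $\ell$ be the $\preceq$-least common multiple of $a_j$ and $\Delta_{X_l,J'}$ (it exists by the LCM condition) and to show $\ell=\Delta_{X_l,J'}a_j$. That element is certainly a common multiple of $a_j$ and $\Delta_{X_l,J'}$; for minimality one passes to the parabolic submonoid generated by $\{a_\gamma:\gamma\in J'\}\cup\{a_j\}$, which by the full commutation is the direct product of $G_{M}^{+}$'s parabolic on $J'$ with $\langle a_j\rangle\cong\N$, in which l.c.m.'s are computed factorwise, and invokes that such a parabolic submonoid is closed under taking l.c.m.'s inside $G_M^{+}$ (\cite{[B-S]}). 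Given $\ell=\Delta_{X_l,J'}a_j$, from $a_j\preceq\Delta_{X_l,J'}d$ and $\Delta_{X_l,J'}\preceq\Delta_{X_l,J'}d$ we obtain $\Delta_{X_l,J'}a_j=\ell\preceq\Delta_{X_l,J'}d$, and left-cancelling $\Delta_{X_l,J'}$ gives $a_j\preceq d$. Note that this same parabolic-closure fact also underlies the two standing assertions used above (that $\Delta_{X_l,K}$ is supported on $K$ and is the l.c.m.\ of its generators as computed in $G_M^{+}$), so the proposition ultimately rests entirely on the Garside-type structure theory of $G_M^{+}$ developed in \cite{[B-S]}; the remainder is bookkeeping with commuting factors.
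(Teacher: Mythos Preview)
Your argument is correct. The paper itself gives no proof of this proposition: it is stated as ``We recall a fact from \cite{[B-S]}'' and nothing more. What you have written is precisely the Brieskorn--Saito argument (\cite{[B-S]}, \S5), so there is no divergence to discuss---you have simply supplied the proof that the paper chose to omit by citation.
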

\noindent
Since the map $\deg$ is an additive map, we can compute 
\begin{equation}
\deg(\Delta_{X_{l}, J}) = \sum_{i=1}^{k} \deg(\Delta_{X_{l}, J_i}). 
\end{equation}

\section{Proof of the type $A_{l}$}
Let $l$ be a positive integer and let $I = \{1, 2, \ldots , l \}$. In this section, we will prove Theorem 1.1 for the type $A_{l}$. First, we have a remark on $k_{A_{l}}(J)$.
\begin{proposition}
{\it For a subset $J \subset I$, we put $j :=\#J$. Then:\\
(1) If the number $j$ satifies an inequality $1 \leq j \leq \lceil \frac{l}{2} \rceil$, then the number of connected components $k_{A_{l}}(J)$ can run over from $1$ to $j$.\\
(2)If the number $j$ satifies an inequality $j > \lceil \frac{l}{2} \rceil$, then the number of connected components $k_{A_{l}}(J)$ can run over from $1$ to $l-j+1$.} 
\end{proposition}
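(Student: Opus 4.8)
The plan is to reduce the statement to elementary combinatorics of the path graph. By the definition of the Coxeter matrix for type $A_{l}$ given in Section 2, the graph $\Gamma_{A_{l}}$ is the path with vertex set $I = \{1,\ldots,l\}$ in which $\alpha$ and $\beta$ are joined by an edge exactly when $|\alpha-\beta| = 1$. Hence for any $J \subset I$ the full subgraph $\Gamma_{A_{l}}(J)$ is a disjoint union of subpaths, and $k_{A_{l}}(J)$ equals the number of maximal blocks of consecutive integers contained in $J$ (its ``runs''). So the proposition asserts exactly: the set of possible run counts of a $j$-element subset of $\{1,\ldots,l\}$ is $\{1,\ldots,j\}$ when $j \le \lceil l/2 \rceil$, and $\{1,\ldots,l-j+1\}$ when $j > \lceil l/2 \rceil$.

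First I would establish the upper bound. If $J$ has $k$ runs, then between any two consecutive runs there lies at least one element of $I \setminus J$; counting the $j = \#J$ elements in the runs together with these $k-1$ separating elements gives $j + (k-1) \le l$, that is $k \le l-j+1$. Trivially also $k \le j$, since each run is nonempty. A short case check on the parity of $l$ shows that $\min(j,\, l-j+1) = j$ precisely when $2j \le l+1$, i.e. when $j \le \lceil l/2 \rceil$, and equals $l-j+1$ when $j > \lceil l/2 \rceil$. This yields the claimed upper bounds in cases (1) and (2).

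Next I would prove realizability: for each integer $k$ with $1 \le k \le \min(j,\, l-j+1)$, I exhibit a set $J$ with $\#J = j$ and $k_{A_{l}}(J) = k$. Take
\[
J := \{1, 2, \ldots, j-k+1\} \cup \{\, j-k+1+2i : 1 \le i \le k-1 \,\},
\]
namely one block of length $j-k+1$ followed by $k-1$ singletons, each separated from its neighbours by a single omitted vertex. Since $k \le j$ we have $j-k+1 \ge 1$, and the largest element used is $(j-k+1) + 2(k-1) = j+k-1 \le l$, so $J \subset I$; by construction $\#J = j$ and $\Gamma_{A_{l}}(J)$ has exactly $k$ connected components. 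Combined with the upper bound, this proves (1) and (2).

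The arguments here are routine; the only point needing care is the bookkeeping that identifies $\min(j,\, l-j+1)$ with $j$ or with $l-j+1$ according to the position of $j$ relative to $\lceil l/2 \rceil$, for which one treats $l$ even and $l$ odd separately.
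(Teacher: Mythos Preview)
Your proof is correct. The paper does not supply a proof of this proposition at all; it is stated as an elementary remark and immediately summarized via $\beta_{l,j}:=\min\{j,\,l-j+1\}$, so there is no argument in the paper to compare against, and your direct combinatorial argument (pigeonhole upper bound plus the explicit construction of a $j$-subset with a prescribed number of runs) is exactly the kind of routine verification the author is tacitly leaving to the reader.
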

\noindent
We put $\beta_{l, j} :=\mathrm{min}\{ j, l-j+1 \}$. Then, the summary of Proposition 3.1 is that the number of connected components $k_{A_{l}}(J)$ can run over from $1$ to $\beta_{l, j}$. For two positive integers $j$ and $k$ with $j \leq l$ and $k \leq \beta_{l, j}$, we put
\[
 N^{(k)}_{A_{l}, j}(t) :=\sum_{J \subset I,\, \#J = j,\, k_{A_{l}}(J)=k} t^{\deg(\Delta_{A_{l}, J})},\,\, C^{(k)}_{A_l, j}:= \left.\frac{\mathrm{d}N^{(k)}_{A_{l}, j}(t)}{\mathrm{d}t}\right|_{t=1}.
\]
Therefore, we have the following equation:
\[
C^{(k)}_{A_{l}, j}= \sum_{J \subset I,\, \#J = j,\, k_{A_{l}}(J)=k} \deg(\Delta_{A_{l}, J}).
\]

\noindent
By definition, we have
\[
C_{A_{l}, j} = \sum_{k=1}^{\beta_{l, j}} C^{(k)}_{A_{l}, j}.
\]
We recall a fact from \cite{[B-S]}.
\begin{proposition}
{\it For a subset $J \subset I$, we suppose that the full subgraph  $\Gamma_{A_{l}}(J)$ is connected. Then, the degree $\deg(\Delta_{A_{l}, J})$ of the fundamental element is given by
\[
\deg(\Delta_{A_{l}, J}) = \binom {\#(J)+1}{2}.
\]
}
\end{proposition}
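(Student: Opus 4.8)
The plan is to reduce the assertion to a single standard component and then count. First I would unwind what connectedness means here: in type $A_l$ two vertices of $\Gamma_{A_l}$ are joined exactly when their indices differ by $1$, so a full subgraph $\Gamma_{A_l}(J)$ is connected if and only if $J$ is an interval $\{p, p+1, \ldots, q\} \subset I$. Writing $n := \#J = q-p+1$, the shift $\alpha \mapsto \alpha - p + 1$ identifies $\Gamma_{A_l}(J)$ with the full Coxeter graph $\Gamma_{A_n}$ and carries $\Delta_{A_l, J}$ to the fundamental element $\Delta_{A_n}$ of the Artin monoid of type $A_n$. Since $\deg$ records only word length, this reduces the claim to the single equality $\deg(\Delta_{A_n}) = \binom{n+1}{2}$.

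Next I would translate $\deg(\Delta_{A_n})$ into a quantity in the finite Coxeter group $W = W(A_n) \cong \fS_{n+1}$. By the structure theory of \cite{[B-S]}, the canonical surjection $G_M^+ \to W$ sends $\Delta_{A_n}$ to the longest element $w_0 \in W$, and every reduced expression for $w_0$ lifts to a positive word of the same length representing $\Delta_{A_n}$; hence $\deg(\Delta_{A_n}) = \ell(w_0)$, the Coxeter length of the longest element. It is standard that $\ell(w_0)$ equals the number of positive roots of the root system, and that $w_0$ acts on $\{1, \ldots, n+1\}$ as the order-reversing permutation $i \mapsto n+2-i$.

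It then remains only to count. The number of inversions of the order-reversing permutation of $n+1$ letters is the number of pairs $i < j$ drawn from $\{1, \ldots, n+1\}$, namely $\binom{n+1}{2}$; equivalently, the root system $A_n$ has exactly $\binom{n+1}{2}$ positive roots. This yields $\deg(\Delta_{A_n}) = \binom{n+1}{2} = \binom{\#J+1}{2}$, as claimed.

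A more self-contained alternative avoids the Coxeter group by exhibiting an explicit positive word for $\Delta_{A_n}$: one verifies that
\[
\Delta_{A_n} = (a_1 a_2 \cdots a_n)(a_1 a_2 \cdots a_{n-1}) \cdots (a_1 a_2)(a_1),
\]
a word of length $n + (n-1) + \cdots + 1 = \binom{n+1}{2}$. Equivalently one may induct using the recursion $\Delta_{A_n} \deeq \Delta_{A_{n-1}} \cdot (a_n a_{n-1} \cdots a_1)$, which raises the degree by $n$ from the base case $\deg(\Delta_{A_1}) = 1$. The main obstacle, in either route, is the same: one must justify that the displayed word (or product) genuinely represents the right least common multiple of the generators $a_1, \ldots, a_n$ — that is, that it is left-divisible by every $a_i$ and is minimal with this property. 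This is precisely where the LCM condition and the Garside-type divisibility theory of $G_M^+$ from \cite{[B-S]} enter; once the word is known to represent $\Delta_{A_n}$, the degree count is immediate.
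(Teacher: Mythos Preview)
Your argument is correct. The paper, however, does not prove this proposition at all: it simply introduces it with ``We recall a fact from \cite{[B-S]}'' and states the formula without further justification. So there is no ``paper's own proof'' to compare against beyond the citation.

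What you have supplied is exactly the content that the citation is pointing to. Your reduction (connected $J$ is an interval, hence the parabolic is itself of type $A_n$ with $n=\#J$) is the right first step, and your identification $\deg(\Delta_{A_n}) = \ell(w_0) = \#\{\text{positive roots}\} = \binom{n+1}{2}$ is precisely the Brieskorn--Saito mechanism. The explicit word $\Delta_{A_n} = (a_1\cdots a_n)(a_1\cdots a_{n-1})\cdots(a_1)$ is also standard and your caveat about needing the divisibility theory from \cite{[B-S]} to certify that this word really is the fundamental element is well placed. In short: you have unpacked the black box that the paper is content to leave closed, and done so accurately.
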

\noindent
From the equation (2.1), we easily show the following formula.
\begin{proposition}
{\it For a subset $J \subset I$, we suppose that the full subgraph  $\Gamma_{A_{l}}(J)$ has a decomposition $\Gamma_{A_{l}}(J) = \bigsqcup_{i=1}^{k} \Gamma_{A_{l}}(J_i)$. Then, the degree of the fundamental element $\Delta_{A_{l}, J}$ can be written as 
\[
\deg(\Delta_{A_{l}, J}) = \sum_{i=1}^{k} \deg(\Delta_{A_{l}, J_i}) = \sum_{i=1}^{k} \binom {\#(J_i)+1}{2}. 
\]
}
\end{proposition}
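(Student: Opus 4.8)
The plan is to reduce the statement to two ingredients that are already on the table: the additivity of $\deg$ along a decomposition into connected components, recorded in equation (2.1), and the closed form $\binom{\#J+1}{2}$ for the degree of the fundamental element attached to a \emph{connected} type-$A$ Coxeter graph, recalled just above as Proposition 3.2. No new input beyond these is needed.

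First I would apply equation (2.1) to the type $X_{l}=A_{l}$ and to the given decomposition $\Gamma_{A_{l}}(J)=\bigsqcup_{i=1}^{k}\Gamma_{A_{l}}(J_i)$. By Proposition 2.1 the elements $\Delta_{A_{l},J_1},\dots,\Delta_{A_{l},J_k}$ pairwise commute and $\Delta_{A_{l},J}=\Delta_{A_{l},J_1}\cdots\Delta_{A_{l},J_k}$; applying the additive map $\deg$ to this product gives at once the first equality
\[
\deg(\Delta_{A_{l},J})=\sum_{i=1}^{k}\deg(\Delta_{A_{l},J_i}).
\]
Next I would observe that each $\Gamma_{A_{l}}(J_i)$ is, by construction of the components, a connected full subgraph of $\Gamma_{A_{l}}$, so Proposition 3.2 applies to each piece and yields $\deg(\Delta_{A_{l},J_i})=\binom{\#(J_i)+1}{2}$. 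Substituting into the first equality and summing over $i=1,\dots,k$ produces the second equality and finishes the proof.

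The only point that deserves an explicit word is that Proposition 3.2 really does apply componentwise: a connected full subgraph of the type-$A_{l}$ graph is a sub-path, i.e.\ $J_i$ is a block of consecutive indices, so the Coxeter submatrix on $J_i$ is exactly that of type $A_{\#J_i}$ and $\Delta_{A_{l},J_i}$ is the fundamental element of the corresponding smaller type-$A$ monoid — precisely the situation covered by Proposition 3.2. There is therefore no genuine obstacle here; the mathematical content was already isolated in Proposition 2.1 and in the connected-case formula taken from \cite{[B-S]}, and the present proposition is merely their assembly. The "hard part", such as it is, amounts only to recognizing that it is this bookkeeping, and not a separate computation, that completes the reduction.
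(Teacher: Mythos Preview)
Your proof is correct and matches the paper's own argument exactly: the paper simply remarks that the proposition follows from equation~(2.1) together with Proposition~3.2, which is precisely the two-step reduction you carry out. Your extra sentence justifying that each connected component is again of type~$A$ is a harmless clarification of what the paper leaves implicit.
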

\noindent

\begin{proposition}
{\it Let  $j$ and $k$ be two positive integers with $j \leq l$ and $k \leq \beta_{l, j}$. For given positive integers $\tau_1, \ldots, \tau_k$ with $\sum_{i=1}^{k}\tau_{i} = j$, we define the set $S_{j, (\tau_1, \ldots, \tau_k)}$ by
\begin{equation*}
\left\{  J \subset I \left|
\begin{array}{l}
\#J = j, \Gamma_{A_{l}}(J) = \bigsqcup_{i=1}^{k} \Gamma_{A_{l}}(J_i)\,\,\mathrm{with}\,\,\mathrm{min}(J_{1}) < \cdots < \mathrm{min}(J_{k})\\
\mathrm{s.t.}\,\, \#J_i=\tau_i (i=1, \ldots, k) \\
\end{array}
\right.\right\}
\end{equation*}
Then, we have the following equation
\[
\#S_{j, (\tau_1, \ldots, \tau_k)} = \binom {l-j+1}{k}.
\]
}
\end{proposition}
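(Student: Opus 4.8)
The plan is to reduce the statement to a standard stars-and-bars count. Recall that $\Gamma_{A_{l}}$ is the linear chain $1 - 2 - \cdots - l$, so a subset $J \subset I$ with $\#J = j$ and $k_{A_{l}}(J) = k$ is precisely a disjoint union of $k$ maximal runs of consecutive integers in $I$; labelling these runs $J_{1}, \ldots, J_{k}$ by increasing minimum is just the canonical left-to-right ordering, so the condition $\mathrm{min}(J_{1}) < \cdots < \mathrm{min}(J_{k})$ introduces no overcounting and the decomposition is determined by $J$. Thus describing an element of $S_{j, (\tau_1, \ldots, \tau_k)}$ amounts to placing, from left to right, $k$ blocks of consecutive vertices of prescribed sizes $\tau_{1}, \ldots, \tau_{k}$ inside $\{1, \ldots, l\}$, with the constraint that consecutive blocks be separated by at least one vertex of $I \setminus J$ (otherwise two runs would merge into a single connected component).

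Next I would encode such a placement by gap variables $g_{0}, g_{1}, \ldots, g_{k}$, where $g_{0} \ge 0$ counts the vertices of $I$ lying to the left of the first block, $g_{k} \ge 0$ counts those lying to the right of the last block, and $g_{i} \ge 1$ for $1 \le i \le k-1$ counts those lying strictly between the $i$-th and $(i+1)$-th block. Since the $k$ blocks together occupy $\sum_{i=1}^{k}\tau_{i} = j$ vertices, these gaps satisfy the single relation $g_{0} + g_{1} + \cdots + g_{k} = l - j$. Every $J \in S_{j, (\tau_1, \ldots, \tau_k)}$ determines such a tuple $(g_{0}, \ldots, g_{k})$ uniquely, and conversely every admissible tuple reconstructs a unique such $J$; hence $\#S_{j, (\tau_1, \ldots, \tau_k)}$ equals the number of integer solutions of this system subject to the stated inequalities.

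Finally I would perform the count: substituting $g_{i} = g_{i}' + 1$ for $1 \le i \le k-1$ turns the system into $g_{0} + g_{1}' + \cdots + g_{k-1}' + g_{k} = l - j - (k-1)$ with all $k+1$ variables now nonnegative, whose number of solutions is $\binom{(l-j-k+1) + k}{k} = \binom{l-j+1}{k}$; the hypothesis $k \le \beta_{l, j} \le l-j+1$ guarantees the right-hand side of the reduced equation is nonnegative, so this is indeed the honest count. This gives the claimed formula, and in passing shows it does not depend on the particular composition $(\tau_{1}, \ldots, \tau_{k})$ of $j$. The only delicate point in the argument is the bookkeeping distinguishing the strict inner-gap inequalities from the non-strict boundary ones; once that is set up correctly, the computation is immediate.
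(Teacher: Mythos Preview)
Your argument is correct: the bijection between elements of $S_{j,(\tau_1,\ldots,\tau_k)}$ and gap tuples $(g_0,\ldots,g_k)$ with $g_0,g_k\ge 0$, $g_1,\ldots,g_{k-1}\ge 1$, and $\sum g_i=l-j$ is clean, and the stars-and-bars reduction yields $\binom{l-j+1}{k}$ exactly as you wrote. The paper itself states this proposition without proof, treating it as an elementary counting fact and passing directly to the remark that the answer is independent of $(\tau_1,\ldots,\tau_k)$; your write-up supplies precisely the standard justification one would expect here.
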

\noindent
We remark that the result does not depend on the choice of positive integers $\tau_1, \ldots, \tau_k$. Hence, the number $\binom {l-j+1}{k}$ devides the number $C^{(k)}_{A_l, j}$. Then, we define the number $\widetilde{C}^{(k)}_{A_l, j}$ by the equation
\[
C^{(k)}_{A_l, j} = \widetilde{C}^{(k)}_{A_l, j}\cdot \binom {l-j+1}{k}.
\]
For two positive integers $j, k$ with $k \leq j$, we put 
\[
T_{k, j}:= \{ (\tau_{1}, \ldots ,\tau_{k}) \in \Z^{k}_{>0} | \sum_{i=1}^{k}\tau_{i} = j \}.
\] 
From the Proposition 3.3, we have
\[
 \widetilde{C}^{(k)}_{A_l, j} = \sum_{\,\,\,\,\,\,(\tau_{1}, \ldots ,\tau_{k}) \in T_{k, j}} \sum_{i=1}^{k}\binom {\tau_{i}+1}{2}. 
\]
\begin{lemma}
{\it Let  $j$ and $k$ be two positive integers with $j \leq l$ and $k \leq \beta_{l, j}$. Then, the following equation $\mathrm{E}_{j, k}$ holds.
\[
 \widetilde{C}^{(k)}_{A_l, j} = k \binom {j+1}{k+1}. 
\]
}
\end{lemma}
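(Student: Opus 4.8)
The plan is to prove the identity $\mathrm{E}_{j,k}\colon \widetilde{C}^{(k)}_{A_l,j}=k\binom{j+1}{k+1}$ purely combinatorially, starting from the formula
\[
\widetilde{C}^{(k)}_{A_l,j}=\sum_{(\tau_1,\ldots,\tau_k)\in T_{k,j}}\ \sum_{i=1}^{k}\binom{\tau_i+1}{2}
\]
supplied just above the statement. By symmetry of $T_{k,j}$ under permuting the coordinates, the inner double sum equals $k$ times $\sum_{(\tau_1,\ldots,\tau_k)\in T_{k,j}}\binom{\tau_1+1}{2}$, so it suffices to show
\[
\sum_{(\tau_1,\ldots,\tau_k)\in T_{k,j}}\binom{\tau_1+1}{2}=\binom{j+1}{k+1}.
\]

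First I would recognize $\binom{\tau_1+1}{2}=\binom{\tau_1+1}{2}$ as counting the number of ways to choose $2$ elements from a $(\tau_1+1)$-element set, or equivalently (the "hockey-stick" reading) $\binom{\tau_1+1}{2}=\sum_{s=1}^{\tau_1}s=\sum_{s=1}^{\tau_1}\binom{s}{1}$. Substituting and interchanging the order of summation, the left-hand side becomes $\sum_{s\ge 1}\#\{(\tau_1,\ldots,\tau_k)\in T_{k,j}:\tau_1\ge s+1\}$... more cleanly, I would instead use the standard fact that $|T_{k,j}|=\binom{j-1}{k-1}$ and, more generally, $\#\{(\tau_1,\ldots,\tau_k)\in T_{k,j}:\tau_1=a\}=\binom{j-a-1}{k-2}$ for $k\ge 2$ (compositions of $j-a$ into $k-1$ positive parts), with the case $k=1$ handled directly since then $\tau_1=j$ and $\binom{j+1}{2}=\binom{j+1}{2}$ matches $\binom{j+1}{k+1}$. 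Thus for $k\ge 2$,
\[
\sum_{(\tau_1,\ldots,\tau_k)\in T_{k,j}}\binom{\tau_1+1}{2}=\sum_{a=1}^{j-k+1}\binom{a+1}{2}\binom{j-a-1}{k-2}.
\]
The remaining task is the Vandermonde-type evaluation $\sum_{a}\binom{a+1}{2}\binom{j-a-1}{k-2}=\binom{j+1}{k+1}$, which I would prove by a direct double-counting argument: $\binom{j+1}{k+1}$ counts $(k+1)$-subsets of $\{0,1,\ldots,j\}$; conditioning on the third-smallest element being $a+1$ gives exactly $\binom{a+1}{2}$ choices for the two smaller elements and $\binom{j-a-1}{k-2}$ choices for the $k-2$ larger ones, and summing over $a$ gives the identity. (Equivalently one can cite the Chu–Vandermonde convolution $\sum_i \binom{m}{i}\binom{n}{p-i}=\binom{m+n}{p}$ after reindexing.)

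Alternatively, and perhaps more transparently, I would give a single bijective proof of $\mathrm{E}_{j,k}$ at once: $k\binom{j+1}{k+1}$ counts pairs $(A,x)$ where $A$ is a $(k+1)$-subset of $\{0,1,\ldots,j\}$ and $x\in A$; from such a pair, read off the gaps of $A\setminus\{x\}$ suitably to produce a composition $(\tau_1,\ldots,\tau_k)\in T_{k,j}$ together with a distinguished "slot" and a choice of a pair within the enlarged block of that slot, matching the summand $\binom{\tau_i+1}{2}$. I expect the main obstacle to be nailing down this bijection (or, in the summation approach, getting the index ranges and the $k=1$ edge case exactly right); the binomial identity itself is routine once the right combinatorial interpretation is fixed. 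I would carry it out in the order: (i) reduce to the $\tau_1$-summand by symmetry; (ii) stratify $T_{k,j}$ by the value of $\tau_1$; (iii) apply the Vandermonde/double-counting identity; (iv) check $k=1$ separately; (v) reassemble the factor $k$.
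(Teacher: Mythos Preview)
Your argument is correct. It differs, however, from the paper's route: the paper proves $\mathrm{E}_{j,k}$ by a double induction on $(j,k)$, handling $k=1$ directly and, for the inductive step, stratifying $T_{s,r+1}$ by $\tau_1=i$ and applying the hypothesis to the tail $(\tau_2,\ldots,\tau_s)$ to obtain
\[
\widetilde{C}^{(s)}_{A_l,r+1}=\sum_{i}\Bigl\{\tbinom{i+1}{2}\tbinom{r-i}{s-2}+\widetilde{C}^{(s-1)}_{A_l,\,r+1-i}\Bigr\},
\]
which is then simplified via hockey-stick identities. You instead exploit the $\mathfrak{S}_k$-symmetry of $T_{k,j}$ at the outset to reduce to the single identity $\sum_{a}\binom{a+1}{2}\binom{j-a-1}{k-2}=\binom{j+1}{k+1}$, and prove that by a clean double-counting (conditioning on the third-smallest element of a $(k{+}1)$-subset of $\{0,\ldots,j\}$). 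This is genuinely more direct: it avoids the double induction entirely, and it supplies a proof for the very convolution sum that the paper's computation uses without justification. The trade-off is that your bijective ``alternative'' at the end is only sketched; if you keep it, you should either spell out the bijection precisely or drop it in favor of the summation argument, which is already complete once you note the $k=1$ case separately.
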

\begin{proof}
We will show the general equation $\mathrm{E}_{j, k}$ by double induction. First, for $k=1$, the subgraph $\Gamma_{A_{l}}(J)$ is connected. Hence, we easily compute $\deg(\Delta_{A_{l}, J}) = \binom {j+1}{2}$. Therefore, we say the equation $\mathrm{E}_{j, 1}$ is true. Next, for induction hypothesis, we assume\\
$(\mathrm{A})$\,\,$\mathrm{E}_{j, k}$ is true for $j = 1, \ldots, r$ and arbitrary $k$, \\
and\\
$(\mathrm{B})$\,\,$\mathrm{E}_{r+1, k}$ is true for $1 \leq k \leq s-1$.\\
We will show the equation $\mathrm{E}_{r+1, s}$. For a positive integer $i \in \{ 1, \ldots, r-s+2\}$, we consider the set $\{ (\tau_{1}, \ldots, \tau_{s}) \in T_{s, j} | \tau_{1} = i \}$. Then, we easily count the number $\# \{ (\tau_{1}, \ldots, \tau_{s}) \in T_{s, j} | \tau_{1} = i \} = \binom {r-i}{s-2}$. By applying the induction hypothesis $(\mathrm{A})$ and $(\mathrm{B})$ to $(\tau_{2}, \ldots, \tau_{s})$, we have
{\small\[
\widetilde{C}^{(s)}_{A_l, j} = \sum_{i=1}^{r-s+2} \biggl\{ \binom {i+1}{2}\binom {r-i}{s-2} + \widetilde{C}^{(s-1)}_{A_l, r+1-i} \biggr\}\,\,\,\,\,\,\,\,\,\,\,\,\,\,\,\,\,\,\,\,\,\,\,\,\,\,\,\,\,\,\,\,\,\,\,\,\,\,\,\,\,\,\,\,\,\,\,\,
\]}
{\small\[
= \sum_{i=1}^{r-s+2} \binom {i+1}{2}\binom {r-i}{s-2} + (s-1) \sum_{i=1}^{r-s+2} \binom {r+2-i}{s}
\]}
{\small\[
= \binom {r+2}{s+1} + (s-1)\binom {r+2}{s+1}= s\binom {r+2}{s+1}.\,\,\,\,\,\,\,\,\,\,\,\,\,\,\,\,\,\,\,\,\,\,\,\,\,\,\,\,
\]}
This completes the proof.
\end{proof}
\begin{theorem}
{\it For positive integers $l, j$ with $l \ge j$, the following equation holds:
{\small\[
C_{A_{l+1}, j+1} - C_{A_{l}, j}= (j+1)\binom {l+1}{j+1}.
\]}
}
\end{theorem}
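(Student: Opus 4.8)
The plan is to express both $C_{A_{l+1}, j+1}$ and $C_{A_{l}, j}$ in terms of the quantities $\widetilde{C}^{(k)}_{A_{l'}, j'}$ computed in Lemma~3.1, and then reduce the claimed identity to a binomial-coefficient identity. Recall that
\[
C_{A_{l}, j} = \sum_{k=1}^{\beta_{l, j}} C^{(k)}_{A_{l}, j} = \sum_{k=1}^{\beta_{l, j}} \widetilde{C}^{(k)}_{A_{l}, j}\cdot \binom{l-j+1}{k} = \sum_{k=1}^{\beta_{l, j}} k\binom{j+1}{k+1}\binom{l-j+1}{k},
\]
where the last equality uses Lemma~3.1. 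The same formula with $l \mapsto l+1$, $j \mapsto j+1$ gives $C_{A_{l+1}, j+1} = \sum_{k} k\binom{j+2}{k+1}\binom{l-j+1}{k}$, and crucially the summation range $\beta_{l+1,j+1} = \min\{j+1, l-j+1\}$ matches $\beta_{l,j} = \min\{j, l-j+1\}$ up to the single extra term $k = j+1$, which contributes only when $j+1 \le l-j+1$; I will need to check that this boundary term is absorbed correctly (note $\binom{j+2}{k+1} = \binom{j+2}{j+2} = 1$ there while $\binom{j+1}{k+1} = \binom{j+1}{j+2} = 0$, so it is consistent to run both sums over $1 \le k \le l-j+1$ after padding with vanishing terms).

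Next I would form the difference term by term in $k$:
\[
C_{A_{l+1}, j+1} - C_{A_{l}, j} = \sum_{k \ge 1} k\binom{l-j+1}{k}\left[\binom{j+2}{k+1} - \binom{j+1}{k+1}\right] = \sum_{k \ge 1} k\binom{l-j+1}{k}\binom{j+1}{k},
\]
using Pascal's rule $\binom{j+2}{k+1} - \binom{j+1}{k+1} = \binom{j+1}{k}$. So the whole statement reduces to proving the clean identity
\[
\sum_{k \ge 1} k\binom{l-j+1}{k}\binom{j+1}{k} = (j+1)\binom{l+1}{j+1}.
\]
This is a Vandermonde-type identity: writing $k\binom{j+1}{k} = (j+1)\binom{j}{k-1}$ pulls out the factor $j+1$, leaving $\sum_{k\ge1}\binom{l-j+1}{k}\binom{j}{k-1} = \sum_{k\ge1}\binom{l-j+1}{k}\binom{j}{j-k+1}$, which by the Chu--Vandermonde identity equals $\binom{l+1}{j+1}$ (the coefficient of $x^{j+1}$ in $(1+x)^{l-j+1}(1+x)^{j} = (1+x)^{l+1}$, after a reindexing $k \mapsto k$ in the first factor and $j-k+1 \mapsto$ its complement in the second).

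The main obstacle I anticipate is bookkeeping at the boundary of the summation index rather than any deep combinatorics: one must be careful that the ranges $k \le \beta_{l,j}$ versus $k \le \beta_{l+1,j+1}$ are handled without spuriously adding or dropping a term, and that the binomial coefficients with out-of-range upper/lower arguments are genuinely zero so that extending all sums to $k \ge 1$ is legitimate. Once that is settled, the computation is just Pascal's rule followed by Chu--Vandermonde. I would therefore organize the proof as: (i) substitute the closed form from Lemma~3.1, (ii) justify a common summation range, (iii) apply Pascal's rule to the bracketed difference, and (iv) finish with the Vandermonde identity.
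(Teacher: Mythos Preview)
Your proposal is correct and follows essentially the same approach as the paper: substitute the closed form $\widetilde{C}^{(k)}_{A_l,j} = k\binom{j+1}{k+1}$ from Lemma~3.5, apply Pascal's rule to the difference $\binom{j+2}{k+1}-\binom{j+1}{k+1}=\binom{j+1}{k}$, and finish with the Chu--Vandermonde identity (which the paper isolates as Lemma~3.8). The only cosmetic difference is that the paper handles the summation range by an explicit case split on whether $j+1 \le \lceil (l+1)/2\rceil$, whereas you extend both sums to all $k\ge 1$ via vanishing binomial coefficients; these are equivalent bookkeeping for the same computation.
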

\begin{proof} 
First, we remark the following.
\begin{proposition}
{\it Therem 3.6 implies Therem 1.1 for the type $A_{l}$.} 
\end{proposition}
\begin{proof} 
It is easy to show $N'_{A_{1}}(1)=-1$. Hence, it suffices to show that\\
 $N'_{A_{l+1}}(1) + N'_{A_{l}}(1)=0$ for any positive integer $l$.
{\small\[
N'_{A_{l+1}}(1) + N'_{A_{l}}(1)\,\,\,\,\,\,\,\,\,\,\,\,\,\,\,\,\,\,\,\,\,\,\,\,\,\,\,\,\,\,\,\,\,\,\,\,\,\,\,\,\,\,\,\,\,\,\,\,\,\,\,
\]}
 \vspace{-0.15cm}
{\small\[
\,\,\,\,\,\,\,\,\,\,\,\,\,\,\,\,\,= -C_{A_{l+1}, 1} + \sum_{j=1}^{l} (C_{A_{l+1}, j+1}- C_{A_{l}, j})(-1)^{j+1}
\]}
 \vspace{-0.15cm}
{\small\[
\,\,\,\,\,\,= -C_{A_{l+1}, 1} + \sum_{j=1}^{l} (-1)^{j+1}(j+1)\binom {l+1}{j+1}
\]}
 \vspace{-0.15cm}
{\small\[
= \sum_{j=0}^{l} (-1)^{j+1}(j+1)\binom {l+1}{j+1}\,\,\,\,\,\,\,\,\,\,\,\,\,\,\,\,\,\,\,\,\,\,\,\,\,
\]}
 \vspace{-0.15cm}
{\small\[
\,\,= -(l+1)\sum_{j=0}^{l} (-1)^{j}\binom {l}{j}\,\,\,\,\,\,\,\,\,\,\,\,\,\,\,\,\,\,\,\,\,\,\,\,\,\,\,\,\,\,\,\,\,\,\,\,\,\,
\]}
 \vspace{-0.15cm}
{\small\[
=- \left.\frac{\mathrm{d}}{\mathrm{d}x}(1+x)^{l+1}\right|_{x=-1} =0.\,\,\,\,\,\,\,\,\,\,\,\,\,\,\,\,\,\,\,\,\,\,\,\,\,
\]}
\end{proof}
\noindent 
To prove Theorem 3.6, we prepare a lemma.
\begin{lemma}
{\it   
 (1) For positive integers $l, j$ with $j+1 \leq \lceil \frac{l+1}{2} \rceil$, the following equation holds:
{\small\[
\sum_{k=1}^{j+1} \binom {j}{k-1}\binom {l-j+1}{k} = \binom {l+1}{j+1}.
\]}
\\
(2) For positive integers $l, j$ with $j+1 > \lceil \frac{l+1}{2} \rceil$, the following equation holds.
{\small\[
\sum_{k=1}^{l-j+1} \binom {j}{k-1}\binom {l-j+1}{k} = \binom {l+1}{j+1}.
\]}
 }
\end{lemma}
\begin{proof} 
(1) We rewrite the equation as follows
{\small\[
 \sum_{k=1}^{j+1} \binom {j}{k-1}\binom {l-j+1}{l-j+1-k} = \binom {l+1}{l-j}.
\]}
 This follows from $(1+x)^{l+1}=(1+x)^{j}(1+x)^{l-j+1}$.\\
(2) In the same way, we obtain the result.
\end{proof}

\noindent
We consider two cases.\par
Case 1\,: $j+1 \leq \lceil \frac{l+1}{2} \rceil$.\\
{\small\[
C_{A_{l+1}, j+1} - C_{A_l, j} \,\,\,\,\,\,\,\,\,\,\,\,\,\,\,\,\,\,\,\,\,\,\,\,\,\,\,\,\,\,\,\,\,\,\,\,\,\,\,\,\,\,\,\,\,\,\,\,\,\,\,\,\,\,\,\,\,\,\,\,\,\,\,\,\,\,\,\,\,\,\,\,\,\,\,\,\,\,\,\,\,\,\,\,\,\,\,\,\,\,\,\,\,\,\,\,\,\,\,\,\,
\]}
{\small\[
= \sum_{k=1}^{j+1} C^{(k)}_{A_{l+1}, j+1} - \sum_{k=1}^{j} C^{(k)}_{A_l, j}\,\,\,\,\,\,\,\,\,\,\,\,\,\,\,\,\,\,\,\,\,\,\,\,\,\,\,\,\,\,\,\,\,\,\,\,\,\,\,\,\,\,\,\,\,\,\,\,\,\,\,\,\,\,\,\,\,\,\,\,\,\,\,\,\,\,\,\,\,\,\,\,\,\,\,\,\,\,
\]}
{\small\[
=\sum_{k=1}^{j+1} k\binom {j+2}{k+1}\binom {l-j+1}{k}  - \sum_{k=1}^{j} k \binom {j+1}{k+1}\binom {l-j+1}{k}
\]}
{\small\[
= (j+1)\binom {l-j+1}{j+1} + \sum_{k=1}^{j} k\binom {j+1}{k}\binom {l-j+1}{k}\,\,\,\,\,\,\,\,\,\,\,\,\,\,\,\,\,\, 
\]}
{\small\[
=  \sum_{k=1}^{j+1} k\binom {j+1}{k}\binom {l-j+1}{k}\,\,\,\,\,\,\,\,\,\,\,\,\,\,\,\,\,\,\,\,\,\,\,\,\,\,\,\,\,\,\,\,\,\,\,\,\,\,\,\,\,\,\,\,\,\,\,\,\,\,\,\,\,\,\,\,\,\,\,\,\,\,\,\,\,\,\,\,\,\,\,\,\,\,
\]}
{\small\[
= (j+1)\sum_{k=1}^{j+1} \binom {j}{k-1}\binom {l-j+1}{k}.\,\,\,\,\,\,\,\,\,\,\,\,\,\,\,\,\,\,\,\,\,\,\,\,\,\,\,\,\,\,\,\,\,\,\,\,\,\,\,\,\,\,\,\,\,\,\,\,\,\,\,\,\,\,\,\,
\]}
Thanks to the Lemma 3.8 (1), we have
{\small\[
C_{A_{l+1}, j+1} - C_{A_l, j}= (j+1)\binom {l+1}{j+1}.
\]}
\par
Case 2\,: $j+1 > \lceil \frac{l+1}{2} \rceil$.\\
{\small\[
C_{A_{l+1}, j+1} - C_{A_{l}, j}\,\,\,\,\,\,\,\,\,\,\,\,\,\,\,\,\,\,\,\,\,\,\,\,\,\,\,\,\,\,\,\,\,\,\,\,\,\,\,\,\,\,\,\,\,\,\,\,\,\,\,\,\,\,\,\,\,\,\,\,\,\,\,\,\,\,\,\,\,\,\,\,\,\,\,\,\,\,\,\,\,\,\,\,\,\,\,\,\,\,\,\,\,\,\,\,\,\,\,\,\,\,\,\,\,\,\,\,\,\,\,\,
\]}
{\small\[
= \sum_{k=1}^{l-j+1} C^{(k)}_{A_{l+1}, j+1} - \sum_{k=1}^{l-j+1} C^{(k)}_{A_l, j}\,\,\,\,\,\,\,\,\,\,\,\,\,\,\,\,\,\,\,\,\,\,\,\,\,\,\,\,\,\,\,\,\,\,\,\,\,\,\,\,\,\,\,\,\,\,\,\,\,\,\,\,\,\,\,\,\,\,\,\,\,\,\,\,\,\,\,\,\,\,\,\,\,\,\,\,\,
\]}
{\small\[
=\sum_{k=1}^{l-j+1} k\binom {j+2}{k+1}\binom {l-j+1}{k}  - \sum_{k=1}^{l-j+1} k \binom {j+1}{k+1}\binom {l-j+1}{k}
\]}
{\small\[
= \sum_{k=1}^{l-j+1} k\binom {j+1}{k}\binom {l-j+1}{k} \,\,\,\,\,\,\,\,\,\,\,\,\,\,\,\,\,\,\,\,\,\,\,\,\,\,\,\,\,\,\,\,\,\,\,\,\,\,\,\,\,\,\,\,\,\,\,\,\,\,\,\,\,\,\,\,\,\,\,\,\,\,\,\,\,\,\,\,\,\,\,\,\,\,\,\,\,\,\,\,
\]}
{\small\[
= (j+1)\sum_{k=1}^{l-j+1} \binom {j}{k-1}\binom {l-j+1}{k}.\,\,\,\,\,\,\,\,\,\,\,\,\,\,\,\,\,\,\,\,\,\,\,\,\,\,\,\,\,\,\,\,\,\,\,\,\,\,\,\,\,\,\,\,\,\,\,\,\,\,\,\,\,\,\,\,\,\,\,\,\,\,
\]}
Thanks to the Lemma 3.8 (2), we have
{\small\[
C_{A_{l+1}, j+1} - C_{A_l, j}= (j+1)\binom {l+1}{j+1}.
\]}
This completes the proof of Theorem 3.6.
\end{proof} 
\section{Proof of the type $B_{l}$}
Let $l$ be a positive integer in $\Z_{\ge2}$ and let $I = \{1, 2, \ldots , l \}$. In this section, we will prove Theorem 1.1 for the type $B_{l}$. We recall a fact from \cite{[B-S]}.
\begin{proposition}
{\it For a subset $J \subset I$, we suppose that the full subgraph  $\Gamma_{B_{l}}(J)$ is connected. Then, the degree $\deg(\Delta_{B_{l}, J})$ of the fundamental element is given by
\[
\deg(\Delta_{B_{l}, J}) = \left \{ \begin{array}{lll}
\#(J)^2  & \mbox{if $J \supset \{ 1, 2 \}$}  \\
 \binom {\#(J)+1}{2} & \mbox{if  $J \not\supset \{ 1, 2 \}$} \\
   
          \end{array}
\right.
\]
}
\end{proposition}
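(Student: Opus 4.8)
The plan is to reduce everything to the shape of the Coxeter graph $\Gamma_{B_l}$ and then to quote the degrees of the fundamental elements for the only two kinds of connected subgraph that can occur. With the convention used here, $m(\alpha,\beta)\ge 3$ holds precisely when $|\alpha-\beta|=1$, so $\Gamma_{B_l}$ is the linear graph $1 - 2 - \cdots - l$, in which the single edge $\{1,2\}$ carries the label $4$ and every other edge carries the (suppressed) label $3$. Hence, for $J\subset I$, the full subgraph $\Gamma_{B_l}(J)$ is connected if and only if $J$ is an interval $\{p,p+1,\ldots,q\}$ with $1\le p\le q\le l$; this is the first step.

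Next I would carry out the case split of the statement. Suppose $J\not\supset\{1,2\}$. Then, since $J$ is an interval, either $p\ge 2$ or else $J=\{1\}$; in both cases no edge of $\Gamma_{B_l}(J)$ carries the label $4$, so $\Gamma_{B_l}(J)$ is, as a labelled graph, the Coxeter graph of type $A_{\#J}$. Since the Artin monoid and its fundamental elements depend only on the labelled Coxeter graph, $\deg(\Delta_{B_l,J})$ equals the degree of the fundamental element of the Artin monoid of type $A_{\#J}$, which by Proposition 3.2 is $\binom{\#J+1}{2}$. Suppose instead $\{1,2\}\subset J$. Then $p=1$ and $J=\{1,2,\ldots,q\}$ with $q=\#J$, and $\Gamma_{B_l}(J)$ is exactly the Coxeter graph of type $B_q$; so $\deg(\Delta_{B_l,J})$ equals the degree $\deg(\Delta_{B_q})$ of the fundamental element of the type-$B_q$ Artin monoid, and it remains to show this is $q^2$.

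For the latter I would use the basic fact from \cite{[B-S]} that, because the defining relations of $G_M^+$ are homogeneous, the canonical surjection $G_M^+\to W_M$ onto the associated finite Coxeter group sends $\Delta_J$ to the longest element $w_J$ of $W_J$ and lifts any reduced word for $w_J$ to a positive word of the same length; thus $\deg(\Delta_{B_q})=\ell(w_0)$ for $W(B_q)$, the number of positive roots of the root system of type $B_q$. Counting the $q$ roots $e_i$ and the $q(q-1)$ roots $e_i\pm e_j$ with $i<j$ gives $q+q(q-1)=q^2$. Alternatively one may induct on $q$: the parabolic decomposition along $W(B_{q-1})\subset W(B_q)$ gives $\ell(w_0(B_q))-\ell(w_0(B_{q-1}))=2q-1$, the length of the longest minimal coset representative, and summing from $B_1$ gives $\sum_{k=1}^q(2k-1)=q^2$.

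The only substantive ingredient is the value $\deg(\Delta_{B_q})=q^2$; the rest is the elementary remark that connected full subgraphs of a path are intervals together with the identification of these intervals as type-$A$ or type-$B$ Coxeter graphs. The one point requiring a small check is the degenerate interval $J=\{1\}$ (or, for that matter, any single vertex), where $\binom{2}{2}=1=1^2$, so the two branches of the case split agree and there is no ambiguity.
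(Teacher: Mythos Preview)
Your argument is correct. The paper itself does not give a proof of this proposition at all: it is stated as a fact recalled from \cite{[B-S]}, just as Proposition~3.2 for type $A_l$ and Proposition~5.1 for type $D_l$ are. So there is no ``paper's own proof'' to compare against; you have supplied a self-contained justification where the paper simply quotes the literature.

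Your route is the natural one and essentially what underlies the cited reference: identify the connected full subgraphs of the path $\Gamma_{B_l}$ as intervals, observe that an interval not containing both $1$ and $2$ has Coxeter type $A_{\#J}$ while one containing $\{1,2\}$ has Coxeter type $B_{\#J}$, and then use $\deg(\Delta_M)=\ell(w_0(W_M))=\#\{\text{positive roots}\}$, which gives $\binom{\#J+1}{2}$ and $(\#J)^2$ respectively. One very small cosmetic point: in the root count for $B_q$ you wrote ``the $q(q-1)$ roots $e_i\pm e_j$ with $i<j$''; strictly there are $\binom{q}{2}$ roots $e_i-e_j$ and $\binom{q}{2}$ roots $e_i+e_j$, totalling $q(q-1)$, so the arithmetic is fine but the phrasing conflates two families. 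Otherwise nothing is missing.
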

\noindent
From the equation (2.1), if the full subgraph  $\Gamma_{B_{l}}(J)$ for a subset $J \subset I$ has a decomposition $\Gamma_{B_{l}}(J) = \bigsqcup_{i=1}^{k} \Gamma_{B_{l}}(J_i)$ with $\mathrm{min}(J_{1}) < \cdots < \mathrm{min}(J_{k})$, then we can compute the degree $\deg(\Delta_{B_{l}, J})$ of the fundamental element.
In the case of $J_1 \not\supset \{ 1, 2 \}$, we compute 
\begin{equation}
\deg(\Delta_{B_{l}, J}) = \sum_{i=1}^{k} \deg(\Delta_{B_{l}, J_i}) = \sum_{i=1}^{k} \binom {\#(J_i)+1}{2}. 
\end{equation}
Moreover, in the case of $J_1 \supset \{ 1, 2 \}$, we compute 
\begin{equation}
\deg(\Delta_{B_{l}, J}) = \sum_{i=1}^{k} \deg(\Delta_{B_{l}, J_i}) = \#(J_1)^2 + \sum_{i=2}^{k} \binom {\#(J_i)+1}{2}. 
\end{equation}
\begin{theorem}
{\it The following equation holds:
{\small\[
N'_{B_{l}}(1) - N'_{A_{l}}(1) = (-1)^{l}(l-1).
\] }
}
\end{theorem}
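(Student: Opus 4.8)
The plan is to compare the skew-growth polynomials of type $B_l$ and type $A_l$ term by term in the number $j = \#J$ of generators, exactly as the proof of Theorem 3.6 compares consecutive $A$-types. Writing $N'_{B_l}(1) = \sum_{j=1}^l (-1)^j C_{B_l,j}$ and similarly for $A_l$, it suffices to control the differences $C_{B_l,j} - C_{A_l,j}$ for each $j$. The key observation is that the Coxeter graphs $\Gamma_{B_l}$ and $\Gamma_{A_l}$ have \emph{the same underlying graph} (a path $1 - 2 - \cdots - l$); the only difference is the label $m(1,2) = 4$ instead of $3$. Hence a subset $J \subset I$ has the same connected-component decomposition in both types, and by Propositions 3.5 and 4.1 the degree $\deg(\Delta_{B_l,J})$ differs from $\deg(\Delta_{A_l,J})$ \emph{only} when the component $J_1$ containing $\min(J)$ actually contains $\{1,2\}$, in which case the contribution $\binom{\#J_1 + 1}{2}$ is replaced by $\#(J_1)^2$, a change of $\#(J_1)^2 - \binom{\#(J_1)+1}{2} = \binom{\#(J_1)}{2}$.

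From here I would carry out the following steps. First, fix $j$ and sum the discrepancy over all $J$ with $\#J = j$ whose leftmost component contains $\{1,2\}$: such a $J$ is determined by choosing the size $\tau_1 \ge 2$ of the block $\{1, \dots, \tau_1\}$ and then choosing the remaining $j - \tau_1$ vertices, forming $k-1$ further components, among the positions to the right of $\tau_1 + 1$. Using the counting already encoded in Proposition 3.3 (or re-deriving it directly: after removing $\{1,\dots,\tau_1+1\}$ we are choosing $j - \tau_1$ vertices in a path on $l - \tau_1 - 1$ vertices with a prescribed number of components), the total discrepancy $C_{B_l,j} - C_{A_l,j}$ becomes a sum of the form $\sum_{\tau_1 \ge 2} \binom{\tau_1}{2} \cdot (\text{number of admissible completions})$. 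Second, I would recognize this sum as a Vandermonde-type convolution and evaluate it in closed form; I expect the answer to be $\binom{l-1}{j}$ or a small variant, so that
\[
C_{B_l,j} - C_{A_l,j} = \binom{l-1}{j-1}
\]
(the precise binomial to be pinned down by a small check at $l = 2, 3$). Third, I would assemble the alternating sum:
\[
N'_{B_l}(1) - N'_{A_l}(1) = \sum_{j=1}^{l} (-1)^j \bigl( C_{B_l,j} - C_{A_l,j}\bigr) = \sum_{j=1}^{l} (-1)^j \binom{l-1}{j-1} = -\sum_{i=0}^{l-1}(-1)^i \binom{l-1}{i},
\]
which vanishes for $l \ge 2$ — but that would contradict the claimed answer $(-1)^l(l-1)$, so in fact the per-$j$ difference cannot be a single binomial coefficient summing to zero; rather I expect an extra term (coming from the block $J_1$ being allowed to equal all of $\{1,\dots,l\}$, or from a boundary effect in the component count) that produces the surviving contribution $(-1)^l(l-1)$.

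The main obstacle, therefore, is getting the bookkeeping of the component count exactly right in the presence of the distinguished block containing $\{1,2\}$: Proposition 3.3 was stated for unconstrained blocks, and here the first block is forced to start at vertex $1$ and have size $\ge 2$, which changes the available "gaps". I would handle this by introducing, in analogy with $\widetilde C^{(k)}_{A_l,j}$, a decomposition $C_{B_l,j} = \sum_k C^{(k)}_{B_l,j}$ and splitting each $C^{(k)}_{B_l,j}$ according to whether $\{1,2\} \subset J_1$ or not; the "not" part equals $C^{(k)}_{A_l,j}$ verbatim, and the "yes" part is a new sum over $(\tau_1,\dots,\tau_k)$ with $\tau_1 \ge 2$ that I can evaluate using Lemma 3.5 together with the convolution identities of Lemma 3.8. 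Once $C_{B_l,j} - C_{A_l,j}$ is in closed form, forming the alternating sum and recognizing it as (a derivative of) $(1+x)^{l-1}$ evaluated at $x = -1$, plus a lone surviving term, will finish the proof; I would double-check the final constant against the explicit small cases $B_2$ and $B_3$ to make sure the sign $(-1)^l$ and the linear factor $(l-1)$ come out correctly.
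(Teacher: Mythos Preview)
Your key observation is exactly right: $\Gamma_{B_l}$ and $\Gamma_{A_l}$ have the same underlying graph, and $\deg(\Delta_{B_l,J}) - \deg(\Delta_{A_l,J}) = \binom{\#J_1}{2}$ whenever the leftmost block $J_1$ contains $\{1,2\}$, and is zero otherwise. But from there you make life harder than necessary by insisting on first evaluating $C_{B_l,j} - C_{A_l,j}$ in closed form and only then taking the alternating sum in $j$. As you discovered, there is no clean single-binomial formula for this per-$j$ difference, and you correctly sensed a ``boundary effect'' but could not locate it.

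The paper's proof bypasses this entirely by grouping not by $j = \#J$ but by $u = \#J_1$, the size of the leftmost block. For each $u \in \{2,\dots,l\}$ the difference $\binom{u}{2}$ is constant over the stratum $S_u = \{J : J_1 = \{1,\dots,u\}\}$, so it factors out, and what remains is $\binom{u}{2}\sum_{J\in S_u}(-1)^{\#J}$. Since the complement $J\setminus J_1$ ranges over \emph{all} subsets of $\{u+2,\dots,l\}$, this alternating sum is $(-1)^u(1-1)^{l-u-1} = 0$ whenever $u \le l-2$. Only $u = l-1$ and $u = l$ survive, and those two terms give $(-1)^{l-1}\binom{l-1}{2} + (-1)^{l}\binom{l}{2} = (-1)^l(l-1)$ immediately.

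Your route can in fact be completed: writing $C_{B_l,j} - C_{A_l,j} = \sum_{u\ge 2}\binom{u}{2}\binom{l-u-1}{j-u}$ (plus a boundary term at $u=l$) and then swapping the order of summation over $u$ and $j$ recovers exactly the paper's computation. So the missing step in your plan is precisely this interchange of sums; once you do it, the ``boundary effect'' you anticipated is revealed to be the non-vanishing of $\sum_i(-1)^i\binom{0}{i}$ at $u=l-1$ together with the isolated term at $u=l$.
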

\begin{proof} 
We compute the difference between $N'_{B_{l}}(1)$ and $N'_{A_{l}}(1)$. From the equations (4.1) and (4.2), we only have to count the case when the set $J_{1}$ contains the index set $\{ 1, 2 \}$. For a positive integer $u \in \{2, \ldots, l-2 \}$ and $X_l \in \{ A_{l}, B_{l}\}$, we put
\begin{equation*}
S_{X_{l}, {u}} := \left\{  J \subset I \left|
\begin{array}{l}
\Gamma_{X_{l}}(J) = \bigsqcup_{i=1}^{k} \Gamma_{X_{l}}(J_i)\,\,\mathrm{with}\,\,\mathrm{min}(J_{1}) < \cdots < \mathrm{min}(J_{k})\\
\mathrm{s.t.}\,\, J_1 = \{ 1, \ldots, u\} \\
\end{array}
\right.\right\}
\end{equation*}
For each $u \in \{2, \ldots, l-2 \}$, the difference on $S_{X_{l}, u}$ is the following
{\small\[
\sum_{J \in S_{B_{l}, u}}(-1)^{\#J} \deg(\Delta_{B_{l}, J}) - \sum_{J \in S_{A_{l}, u}}(-1)^{\#J} \deg(\Delta_{A_{l}, J}) \,\,\,\,\,\,\,\,\,\,
\]}
{\small\[
= \sum_{J \in S_{B_{l}, u}}(-1)^{\#J} \biggl\{\deg(\Delta_{B_{l}, J}) - \deg(\Delta_{A_{l}, J})\biggr\} \,\,\,\,\,\,\,\,\,\,\,\,\,\,\,\,\,\,\,\,\,\,\,\,\,\,\,\,\,\,\,
\]}
{\small\[
= \sum_{J \in S_{B_{l}, u}}(-1)^{\#J}\biggl\{ u^2 - \binom {u+1}{2} \biggr\}\,\,\,\,\,\,\,\,\,\,\,\,\,\,\,\,\,\,\,\,\,\,\,\,\,\,\,\,\,\,\,\,\,\,\,\,\,\,\,\,\,\,\,\,\,\,\,\,\,\,\,\,\,\,\,\,\,\,\,
\]}
{\small\[
 = \binom {u}{2} \sum_{J \in S_{B_{l}, u}}(-1)^{\#J}  =0.\,\,\,\,\,\,\,\,\,\,\,\,\,\,\,\,\,\,\,\,\,\,\,\,\,\,\,\,\,\,\,\,\,\,\,\,\,\,\,\,\,\,\,\,\,\,\,\,\,\,\,\,\,\,\,\,\,\,\,\,\,\,\,\,\,\,\,\,\,\,\,\,\,\,\,\,\,\,
\]}
Hence, we only have to count the cases $J_1 = \{ 1, \ldots, l-1\}, \{ 1, \ldots, l\}$
{\small\[
N'_{B_{l}}(1) - N'_{A_{l}}(1) \,\,\,\,\,\, \,\,\,\,\,\, \,\,\,\,\,\, \,\,\,\,\,\,\,\,\,\,\,\,\,\,\,\,\,\,\,\,\,\,\,\,\,\,\,\,\,\,\,\,\,\,\,\,\,\,\,\,\,\,\,\,\,\,\,\,\,\,\,\,\,\,\,\,\,\,\,\,\,\,\,\,\,\,\,\,\, 
\]}
{\small\[
\,\,\,\,\,\,\,\,= (-1)^{l-1}\biggl\{ (l-1)^2 - \binom {l}{2} \biggr\} + (-1)^{l}\biggl\{ l^2 - \binom {l+1}{2} \biggr\} 
\]}
{\small\[
= (-1)^{l}(l-1).\,\,\,\,\,\,\,\,\,\,\,\,\,\,\,\,\,\,\,\,\,\,\,\,\,\,\,\,\,\,\,\,\,\,\,\,\,\,\,\,\,\,\,\,\,\,\,\,\,\,\,\,\,\,\,\,\,\,\,\,\,\,\,\,\,\,\,\,\,\,\,\,\,\,\,\,\,\,\,\,\,\,\,\,\,\,\,\,\,\,\,\,
\]}
This completes the proof.
\end{proof} 
\section{Proof of the type $D_{l}$}
Let $l$ be a positive integer in $\Z_{\ge4}$ and let $I = \{1, 2, \ldots , l \}$. In this section, we will prove Theorem 1.1 for the type $D_{l}$. We recall a fact from \cite{[B-S]}.
\begin{proposition}
{\it For a subset $J \subset I$, we suppose that the full subgraph  $\Gamma_{D_{l}}(J)$ is connected. Then, the degree $\deg(\Delta_{D_{l}, J})$ of the fundamental element is given by
\[
\deg(\Delta_{D_{l}, J}) = \left \{ \begin{array}{lll}
 \binom {\#(J)+1}{2} & \mbox{if $J \not\supset \{ l-1, l \}$}  \\
 \#(J)(\#(J)-1) & \mbox{if  $J \supset \{ l-3, l-2, l-1, l \}$} \\
  6 & \mbox{if  $J = \{ l-2, l-1, l \}$}
          \end{array}
\right.
\]
}
From the equation (2.1), if the full subgraph  $\Gamma_{D_{l}}(J)$ for a subset $J \subset I$ has a decomposition $\Gamma_{D_{l}}(J) = \Gamma_{D_{l}}(J_1) \sqcup \cdots \sqcup \Gamma_{D_{l}}(J_k)$ with $\mathrm{min}(J_{1}) < \cdots < \mathrm{min}(J_{k})$, then we can compute the degree $\deg(\Delta_{D_{l}, J})$ of the fundamental element.
\end{proposition}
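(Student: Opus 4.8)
The plan is to reduce the computation of $\deg(\Delta_{D_l,J})$, for a connected subgraph $\Gamma_{D_l}(J)$, to counting positive roots of a finite Coxeter system, and then to run through the short list of connected full subgraphs of $\Gamma_{D_l}$.

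First I would recall, exactly as is used for Propositions 3.2 and 4.1, the basic fact from \cite{[B-S]}\S5: for connected $\Gamma_{D_l}(J)$ the fundamental element $\Delta_{D_l,J}$ projects to the longest element $w_0$ of the finite Coxeter group $W_{M|_{J}}$ attached to the restricted Coxeter matrix $M|_{J}$, and $\deg(\Delta_{D_l,J}) = \ell(w_0)$; by the classical identity this equals $\#\Phi^{+}$, the number of positive roots of that Coxeter system. Since $\Delta_{D_l,J}$ — hence $\deg(\Delta_{D_l,J})$ — depends only on $M|_{J}$, i.e., only on the Coxeter-isomorphism type of $\Gamma_{D_l}(J)$, it then suffices to (i) determine which connected types occur as full subgraphs of $\Gamma_{D_l}$, and (ii) record the corresponding root numbers.

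For step (i), unwinding the Coxeter matrix of type $D_l$ recorded in \S5 shows that $\Gamma_{D_l}$ is the path $1-2-\cdots-(l-1)$ together with the single extra edge joining vertices $l-2$ and $l$; thus $l-2$ is the unique vertex of degree $\ge 3$, with legs $\{1,\dots,l-3\}$, $\{l-1\}$ and $\{l\}$. If $J\not\supset\{l-1,l\}$, then in $\Gamma_{D_l}(J)$ the vertex $l-2$ (if it lies in $J$ at all) has degree $\le 2$, so a connected $\Gamma_{D_l}(J)$ is a path, of type $A_{\#J}$. If $J\supset\{l-1,l\}$, then connectedness forces $l-2\in J$ (the only common neighbour of the non-adjacent vertices $l-1,l$), and iterating this forces $J$ to be a terminal interval $\{l-m+1,\dots,l\}$ with $m=\#J\ge 3$, which is of type $A_3$ when $m=3$ and of type $D_m$ when $m\ge 4$. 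For step (ii) I would then substitute the standard counts $\#\Phi^{+}(A_n)=\binom{n+1}{2}$ and $\#\Phi^{+}(D_m)=m(m-1)$ (the latter read off from the realization $\{\pm e_i\pm e_j\}_{i<j}$ of the $D_m$ root system), together with $\binom{4}{2}=6$; this reproduces precisely the three cases of the statement. The closing remark of the Proposition, computing $\deg(\Delta_{D_l,J})$ for disconnected $\Gamma_{D_l}(J)$ from its connected components, is then immediate from the additivity of $\deg$ recorded in (2.1).

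The step that carries the real content is (i): reading off $\Gamma_{D_l}$ correctly from the slightly nonstandard indexing of the Coxeter matrix, and then checking the case analysis of its connected full subgraphs. I do not expect a genuine obstacle here — once the diagram is in hand, classifying the connected subdiagrams of a graph with a single branch vertex and two unit legs is elementary — while the reduction described above and the root counts of (ii) are standard facts that one simply cites from \cite{[B-S]} and the structure theory of finite Coxeter groups.
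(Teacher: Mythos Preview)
Your argument is correct: the reduction $\deg(\Delta_{D_l,J})=\ell(w_0)=\#\Phi^+$ for connected $J$ is the standard Brieskorn--Saito fact, your reading of the $D_l$ diagram from the paper's Coxeter matrix is accurate, and the classification of connected full subgraphs (paths when $J\not\supset\{l-1,l\}$; terminal intervals $\{l-m+1,\dots,l\}$ of type $A_3$ or $D_m$ otherwise) is complete and correctly matched to the three cases in the statement.

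As for comparison with the paper: there is essentially nothing to compare. The paper does not prove Proposition~5.1; it is introduced with ``We recall a fact from \cite{[B-S]}'' and treated exactly like Propositions~3.2 and~4.1, i.e., as a citation. So your write-up supplies strictly more than the paper does here --- it spells out the elementary argument behind the cited fact, which is appropriate if you want the exposition to be self-contained, but is not something the paper itself undertakes.
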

\begin{theorem}
{\it The following equality holds:
{\small\[
N'_{D_{l}}(1)  = (-1)^{l}(l-2).
\] }
}
\end{theorem}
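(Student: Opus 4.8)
The strategy mirrors the $B_l$ case: I would compute the difference $N'_{D_l}(1) - N'_{A_l}(1)$ and then invoke Theorem 1.1 for type $A_l$, namely $N'_{A_l}(1) = (-1)^l$. Since $N'_{D_l}(1) - N'_{A_l}(1)$ must equal $(-1)^l(l-2) - (-1)^l = (-1)^l(l-3)$, this is the target identity to establish. The point is that the Coxeter graph $\Gamma_{D_l}$ differs from $\Gamma_{A_l}$ only near the branch node: by the labelling chosen in \S2, $\Gamma_{D_l}$ is the path $1 - 2 - \cdots - (l-2) - (l-1)$ with an extra vertex $l$ attached to $l-2$, so the degree formula $\deg(\Delta_{D_l,J})$ of Proposition 5.1 agrees with $\deg(\Delta_{A_l,J})$ on every connected full subgraph unless that subgraph contains the branch, i.e. unless it contains $\{l-3,l-2,l-1,l\}$ or equals $\{l-2,l-1,l\}$. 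Using the additivity of $\deg$ from (2.1) together with the decomposition Proposition 2.1, the contributions of all $J$ whose branch-containing component is ``small'' (the case $J_{\text{branch}} = \{l-2,l-1,l\}$) or only partially overlaps the branch must be handled, and I expect—just as the $\binom{u}{2}\sum(-1)^{\#J}=0$ cancellation did for $B_l$—that most of these contributions cancel in the alternating sum, leaving only a bounded number of explicit terms.

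Concretely, I would stratify the subsets $J \subset I$ by the component $J_c$ of $\Gamma_{D_l}(J)$ that contains the branch (if any). If no component of $J$ contains a vertex whose $D_l$-degree differs from its $A_l$-degree, the term contributes $0$ to the difference. Otherwise $J_c$ is a connected full subgraph of $\Gamma_{D_l}$ containing the branch vertex; by Proposition 5.1 the excess $\deg(\Delta_{D_l,J_c}) - \binom{\#J_c + 1}{2}$ equals $\#J_c(\#J_c - 1) - \binom{\#J_c+1}{2} = \binom{\#J_c}{2}$ when $J_c \supset \{l-3,l-2,l-1,l\}$, and equals $6 - \binom{4}{2} = 0$ when $J_c = \{l-2,l-1,l\}$; in the latter case, and more generally whenever the overlap of the component with the branch is not full, the contribution vanishes. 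For fixed $J_c$ of a given size $m$, the remaining vertices of $J$ lie in $\{1,\dots\}$ to the left of $J_c$ and must be separated from it by at least one gap; summing $(-1)^{\#J}$ over all such completions gives an alternating sum of the form $\pm\binom{\text{something}}{2}\sum_{J'}(-1)^{\#J'}$, which is zero except when there is no room for a gap — i.e. when $J_c$ already reaches the left end of the graph. This isolates a short list of ``boundary'' configurations (those with $J_c = \{1,2,\dots,l-2,l-1,l\} \setminus (\text{short initial segment})$, analogous to $J_1 = \{1,\dots,l-1\},\{1,\dots,l\}$ in the $B_l$ proof).

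I would then evaluate that finite list of surviving terms by hand: each is $(-1)^{\#J}\binom{\#J_c}{2}$ for the few $J_c$'s that abut the end of the graph, and summing them should telescope to $(-1)^l(l-3)$. Adding $N'_{A_l}(1) = (-1)^l$ gives $N'_{D_l}(1) = (-1)^l(l-2)$, which is the claim; since $l \ge 4$ this is nonzero, consistent with Corollary 1.2. The main obstacle I anticipate is bookkeeping: unlike $B_l$, where the special component was forced to start at vertex $1$, here the branch sits at the \emph{other} end of the path and the ``extra leaf'' vertex $l$ can be present or absent, so I must carefully enumerate which connected full subgraphs of $\Gamma_{D_l}$ actually contain $\{l-3,l-2,l-1,l\}$ versus merely containing the leaf or merely $\{l-2,l-1,l\}$, and track the alternating-sum cancellations in each stratum without double-counting. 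Once the correct stratification is fixed, each stratum's cancellation is a one-line binomial identity of the type already used in Lemma 3.8 and the $B_l$ proof.
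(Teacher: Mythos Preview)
Your plan has a genuine gap. The $B_l$-versus-$A_l$ argument worked because $\Gamma_{B_l}$ and $\Gamma_{A_l}$ have the \emph{same underlying graph}: only the label on the edge $\{1,2\}$ changes, so for every $J$ the connected components of $\Gamma_{B_l}(J)$ and $\Gamma_{A_l}(J)$ coincide, and the difference $\deg(\Delta_{B_l,J})-\deg(\Delta_{A_l,J})$ really is carried by a single component. This fails for $D_l$ versus $A_l$: the edge sets differ (you replace the edge $\{l-1,l\}$ by $\{l-2,l\}$), so the component decompositions themselves can disagree. For instance $J=\{l-1,l\}$ is connected in $A_l$ (degree $3$) but disconnected in $D_l$ (degree $1+1=2$), while $J=\{l-2,l\}$ is the reverse; and $J=\{a,\ldots,l-2,l\}$ with $l-1\notin J$ is one $A$-type path in $D_l$ but splits into $\{a,\ldots,l-2\}\sqcup\{l\}$ in $A_l$. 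None of these contain your ``branch'' $\{l-3,l-2,l-1,l\}$, yet all contribute to the difference, so your stratification by ``the component containing the branch'' misses them. There is also an arithmetic slip: for $m=\#J_c\ge 4$ one has $m(m-1)-\binom{m+1}{2}=\tfrac{m(m-3)}{2}$, not $\binom{m}{2}$.

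Your approach can be rescued by a finer case split on which of $l-2,l-1,l$ lie in $J$ (eight cases, several of which do cancel), but the paper sidesteps the issue entirely: it proves the formula by induction on $l$, computing $N'_{D_l}(1)-N'_{D_{l-1}}(1)$. Stratifying $J\subset\{1,\ldots,l\}$ by its leftmost component $J_1=\{1,\ldots,u\}$, the remaining piece lives in $\{u+2,\ldots,l\}$ and its alternating degree-sum is exactly $N'_{D_{l-u-1}}(1)$ (or $N'_{A_3}(1)$ when $u=l-4$), which is known by the induction hypothesis; the first term $\deg(\Delta_{D_l,J_1})\sum_{K}(-1)^{\#K}$ vanishes. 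Summing over $u$ gives $N'_{D_l}(1)-N'_{D_{l-1}}(1)=(-1)^l(2l-5)$, and induction finishes. This avoids comparing two different graphs altogether.
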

\begin{proof} 
We will show the statement by induction on $l$. First, for $l=4$, we easily compute $N'_{D_{4}}(1) = \sum_{J \subset I}(-1)^{\#J}\deg(\Delta_{D_{4}, J}) = 12$. Next, by applying the induction hypothesis, we will compute the difference $N'_{D_{l}}(1) - N'_{D_{l-1}}(1)$. For a positive integer $u \in \{1, \ldots, l-2 \}$, we put
\begin{equation*}
S_{D_{l}, {u}} := \left\{  J \subset I \left|
\begin{array}{l}
\Gamma_{D_{l}}(J) = \bigsqcup_{i=1}^{k} \Gamma_{D_{l}}(J_i)\,\,\mathrm{with}\,\,\mathrm{min}(J_{1}) < \cdots < \mathrm{min}(J_{k})\\
\mathrm{s.t.}\,\, J_1 = \{ 1, \ldots, u\} \\
\end{array}
\right.\right\}
\end{equation*}

\[
I_{u} := I\setminus \{1, \ldots, u, u+1 \}.\,\,\,\,\,\,\,\,\,\,\,\,\,\,\,\,\,\,\,\,\,\,\,\,\,\,\,\,\,\,\,\,\,\,\,\,\,\,\,\,\,\,\,\,\,\,\,\,\,\,\,\,\,\,\,\,\,\,\,\,\,\,\,\,\,\,\,\,\,\,\,\,\,\,\,\,\,\,\,\,\,\,\,\,\,\,\,\,\,\,\,\,\,\,\,\,\,\,\,\,\,\,\,\,\,\,\,\,\,\,\,\,\,\,\,\,\,\,\,\,\,\,\,\,\,\,\,
\]
For each $u \in \{1, \ldots, l-5 \}$, the difference on $S_{D_{l}, u}$ is the following
{\small\[
\sum_{J \in S_{D_{l}, u}}(-1)^{\#J} \deg(\Delta_{D_{l}, J}) \,\,\,\,\,\,\,\,\,\,\,\,\,\,\,\,\,\,\,\,\,\,\,\,\,\,\,\,\,\,\,\,\,\,\,\,\,\,\,\,\,\,\,\,\,\,\,\,\,\,\,\,\,\,\,\,\,\,\,\,\,\,\,\,\,\,\,\,\,\,\,\,\,\,\,\,\,\,\,\,\,\,\,\,\,\,\,\,\,\,\,\,\,\,\,\,\,\,\,\,\,\,\,\,\,\,\,\,\,
\]}
{\small\[
=(-1)^{u}\sum_{K \subset I_{u}}(-1)^{\#K} \biggl\{ \deg(\Delta_{D_{l}, J_{1}}) + \deg(\Delta_{D_{l}, K}) \biggr\} \,\,\,\,\,\,\,\,\,\,\,\,\,\,\,\,\,\,\,\,\,\,\,\,\,\,\,\,\,\,\,\,\,\,\,\,\,\,\,\,\,\,\,\,
\]}
{\small\[
=(-1)^{u} \deg(\Delta_{D_{l}, J_{1}})\sum_{K \subset I_{u}}(-1)^{\#K} + (-1)^{u}\sum_{K \subset I_{u}} (-1)^{\#K}\deg(\Delta_{D_{l}, K}) 
\]}
{\small\[
= (-1)^{u}\sum_{K \subset I_{u}} (-1)^{\#K}\deg(\Delta_{D_{l}, K}) \,\,\,\,\,\,\,\,\,\,\,\,\,\,\,\,\,\,\,\,\,\,\,\,\,\,\,\,\,\,\,\,\,\,\,\,\,\,\,\,\,\,\,\,\,\,\,\,\,\,\,\,\,\,\,\,\,\,\,\,\,\,\,\,\,\,\,\,\,\,\,\,\,\,\,\,\,\,\,\,\,\,\,\,\,\,\,\,\,\,
\]}
{\small\[
= (-1)^{u}N'_{D_{l-u-1}}(1). \,\,\,\,\,\,\,\,\,\,\,\,\,\,\,\,\,\,\,\,\,\,\,\,\,\,\,\,\,\,\,\,\,\,\,\,\,\,\,\,\,\,\,\,\,\,\,\,\,\,\,\,\,\,\,\,\,\,\,\,\,\,\,\,\,\,\,\,\,\,\,\,\,\,\,\,\,\,\,\,\,\,\,\,\,\,\,\,\,\,\,\,\,\,\,\,\,\,\,\,\,\,\,\,\,\,\,\,\,\,\,\,\,\,\,\,\,\,\,\,\,\,\,\,
\]}
From the induction hypothesis, this is equal to $(-1)^{l-1}(l-u-3)$.
For $u = l-4$, the difference on $S_{D_{l}, l-4}$ is computed in a similar manner
{\small\[
\sum_{J \in S_{D_{l}, l-4}}(-1)^{\#J} \deg(\Delta_{D_{l}, J}) \,\,\,\,\,\,\,\,\,\,\,\,\,\,\,\,\,\,\,\,\,\,\,\,\,\,\,\,\,\,\,\,\,\,\,\,\,\,\,\,\,\,\,\,\,\,\,\,\,\,\,\,\,\,\,\,\,\,\,\,\,\,\,\,\,\,\,\,\,\,\,\,\,\,\,\,\,\,\,\,\,\,\,\,\,\,\,\,\,\,\,\,\,
\]}
{\small\[
= (-1)^{l-4}N'_{A_{3}}(1). \,\,\,\,\,\,\,\,\,\,\,\,\,\,\,\,\,\,\,\,\,\,\,\,\,\,\,\,\,\,\,\,\,\,\,\,\,\,\,\,\,\,\,\,\,\,\,\,\,\,\,\,\,\,\,\,\,\,\,\,\,\,\,\,\,\,\,\,\,\,\,\,\,\,\,\,\,\,\,\,\,\,\,\,\,\,\,\,\,\,\,\,\,\,\,\,\,\,\,\,\,\,\,\,\,\,\,\,\,\,
\]}
For $u = l-3$, we easily compute
{\small\[
\sum_{J \in S_{D_{l}, l-3}}(-1)^{\#J} \deg(\Delta_{D_{l}, J})=0. \,\,\,\,\,\,\,\,\,\,\,\,\,\,\,\,\,\,\,\,\,\,\,\,\,\,\,\,\,\,\,\,\,\,\,\,\,\,\,\,\,\,\,\,\,\,\,\,\,\,\,\,\,\,\,\,\,\,\,\,\,\,\,\,\,\,\,\,\,\,\,\,\,\,
\]}
Therefore, we can compute the difference
{\small\[
N'_{D_{l}}(1) - N'_{D_{l-1}}(1)\,\,\,\,\,\,\,\,\,\,\,\,\,\,\,\,\,\,\,\,\,\,\,\,\,\,\,\,\,\,\,\,\,\,\,\,\,\,\,\,\,\,\,\,\,\,\,\,\,\,\,\,\,\,\,\,\,\,\,\,\,\,\,\,\,\,\,\,\,\,\,\,\,\,\,\,\,\,\,\,\,\,\,\,\,\,\,\,\,\,\,\,\,\,\,\,\,\,
\]}
{\small\[
=\sum_{u=1}^{l-3} \sum_{J \in S_{D_{l}, u}}(-1)^{\#J} \deg(\Delta_{D_{l}, J}) + (-1)^{l-2}\binom {l-1}{2}\,\,\,\,\,\,\,\,\,\,\,\,\,\,\,\,\,
\]}
{\small\[
=(-1)^{l-1} \biggl\{ (l-4)+(l-3)+ \cdots +1 \biggr\} + (-1)^{l-2}\binom {l-1}{2}
\]}
{\small\[
= (-1)^{l}(2l-5).\,\,\,\,\,\,\,\,\,\,\,\,\,\,\,\,\,\,\,\,\,\,\,\,\,\,\,\,\,\,\,\,\,\,\,\,\,\,\,\,\,\,\,\,\,\,\,\,\,\,\,\,\,\,\,\,\,\,\,\,\,\,\,\,\,\,\,\,\,\,\,\,\,\,\,\,\,\,\,\,\,\,\,\,\,\,\,\,\,\,\,\,\,\,\,\,\,\,\,\,\,\,\,\,\,\,\,\,
\]}
From the induction hypothesis, we have
{\small\[
N'_{D_{l}}(1)=(-1)^{l}(l-2).
\]}
\end{proof}
%\section{Addendum}
%In \cite{[A-N]}\S3, for the type $X_{l} = A_{l}, B_{l}, D_{l}$, the authors obt%ained some recurrence relations among the skew growth functions of type $X_{l}$%.
\noindent
\emph{Acknowledgement.}\! 
%\begin{acknowledgement}
The author thanks Kyoji Saito for very interesting discussions and encouragement. The author is grateful to Toshitake Kohno for his encouragement. This research is supported by JSPS Fellowships for Young Scientists $(24\cdot10023)$. This researsh is also supported by World Premier International Research Center Initiative (WPI Initiative), MEXT, Japan.
%\end{acknowledgement}

\begin{flushright}
\begin{small}
Kavli IPMU, \\
University of Tokyo, \\
Kashiwa, Chiba 277-8583 Japan \\

%Department of Mathematical Sciences, \\
%University of Tokyo, \\
%3-8-1 Komaba Meguro-ku Tokyo, 153-8914 Japan \\
e-mail address :  tishibe@ms.u-tokyo.ac.jp
\end{small}
\end{flushright}
\end{document}